\newtheorem{theorem}{Theorem}[section]
\newtheorem{lemma}[theorem]{Lemma}
\newtheorem{proposition}{Proposition}
\theoremstyle{definition}
\newtheorem{definition}[theorem]{Definition}
\newtheorem{remark}{Remark}
\definecolor{darkred}{rgb}{0.9,0.1,0.1}
\newcommand{\GG}{\mathbb{G}}
\newcommand{\be}{\begin{equation}}
\newcommand{\ee}{\end{equation}}
\newcommand{\ba}{\begin{array}}
\newcommand{\ea}{\end{array}}
\title[Stationary convection-diffusion equation in an infinite cylinder]{Stationary convection-diffusion equation in an infinite cylinder}
\subjclass{Primary: ???; Secondary: ???}
\keywords{Convection-diffusion equation, elliptic operators in unbounded domains, infinite cylinder, stabilization at infinity, effective drift, solvability, Fredholm alternative.}
 \email{irina.pettersson@uit.no}
 \email{andrey.piatnitski@uit.no}
\thanks{}
\g@addto@macro{\endabstract}{\@setabstract}
\newcommand{\authorfootnotes}{\renewcommand\thefootnote{\@fnsymbol\c@footnote}}%
\begin{document}
\maketitle

%$$
%A\xleftarrow[z]{f}B
%\xrightarrow{f+g-h}C
%$$

\begin{center}

  \normalsize
  \authorfootnotes
  Irina Pettersson \textsuperscript{1} and Andrey Piatnitski\textsuperscript{1}

  \par \bigskip

  \textsuperscript{1}UiT The Arctic University of Norway\\[3mm]
  %\textsuperscript{2}Department of Mechanical Engineering, \LaTeX\ University\par
  %\textsuperscript{2}Institute for Information Transmission Problems \\ of Russian Academy of Sciences                        \par
  \bigskip

\today
\end{center}

%The abstract of your paper
\begin{abstract}
We study the existence and uniqueness of a solution to a linear stationary convection-diffusion equation stated in an infinite cylinder, Neumann boundary condition being imposed on the boundary. We assume that the cylinder is a junction of two semi-infinite cylinders with two different periodic regimes. Depending on the direction of the effective convection in the two semi-infinite cylinders, we either get a unique solution, or one-parameter family of solutions, or even non-existence in the general case. In the latter case we provide necessary and sufficient conditions for the existence of a solution.
\end{abstract}

\section*{Introduction}

%\textcolor{red}{We should refer to your work with G.Allaire. In the proof of Theorem 4.1, p. 14 I have %cited the result we needTheorem ??? in \cite{AlPi-2016}. Could you please insert the reference to ArXiv %or the online version!  }

The paper deals with a stationary linear convection-diffusion equation in an infinite cylinder $\mathbb G=(-\infty,\infty)\times Q$ with a Lipschitz bounded domain $Q\subset\mathbb R^{d-1}$, at the cylinder boundary the Neumann condition being imposed. We assume that, except for a compact set in $\mathbb G$, the coefficients of the convection-diffusion operator are periodic in $x_1$ both in the left and in the right
half cylinder. These two periodic operators need not coincide. This problem reads
\begin{equation}
\label{eq:or-prob}
  \left\{
   \begin{array}{lcr}
   \displaystyle{
    - {\rm div} \, (a(x) \, \nabla \, u(x)) + b(x)\cdot\nabla u(x) = f(x),}
    \quad \hfill x \in \GG,
    \\
    \displaystyle{
    \ \ \,a(x)\nabla u(x)\cdot n = g(x),} \quad \hfill x \in \Sigma.
   \end{array}
  \right.
\end{equation}
Under uniform ellipticity assumptions we study if this problem has a bounded solution and
if such a solution is unique. Concerning the functions $f$ and $g$ we assume that they decay
fast enough as $|x_1|\to\infty$.
Following \cite{PaPi} one can introduce the so-called effective axial drifts $\bar b^{+}$ and $\bar b^{-}$  in the right and left halves of the cylinder, respectively. It turns out that the mentioned existence and uniqueness issues depend on the signs of $\bar b^{+}$ and $\bar b^{-}$ (both effective drifts can be positive, or negative, or zero).

The main result of the paper is summarised below.

If $\bar b^{+}<0$ and $\bar b^{-}>0$, then for any two constants $K^-$ and $K^+$ there is a solution of \eqref{eq:or-prob} that converges to $K^-$ as $x_1\to -\infty$ and to $K^+$ as $x_1\to +\infty$.

If  $\bar b^{+}\ge 0$ and $\bar b^{-}>0$ or $\bar b^{+}<0$ and $\bar b^{-}\le0$ then a bounded solution exists and is unique up to an additive constant.

The case  $\bar b^{+}\ge 0$ and $\bar b^{-}\le 0$ is more interesting. In this case a bounded solution need not exist.  We will show that in this case the problem adjoint to \eqref{eq:or-prob} has a bounded solution $p\in  C(\bar{\mathbb G})$, which is positive under proper normalization. Then problem  \eqref{eq:or-prob} has a bounded solution if and only if
\begin{align}
\label{ort-cond}
\int\limits_{\mathbb G}f(x)p(x)\,dx+\int\limits_{\Sigma}g(x)p(x)d\sigma=0.
\end{align}
A bounded solution in this case is unique up to an additive constant.

The qualitative behaviour of the function $p$ in the two semi-infinite cylinders varies depending on whether the effective drift in that cylinder is equal to zero or not. Namely, if $\bar b^{+}<0$ and $\bar b^{-}>0$, then $p$ decays exponentially as $x_1 \to \infty$. If, however, the effective drift is zero in one of the semi-infinite cylinders, $p$ will stabilise to a periodic regime in that part, as $x_1 \to \infty$.

In all three cases any bounded solution converges
to some constants as $|x_1|\to~\infty$.
Moreover, this convergence has exponential rate if $f(x)$ and $g(x)$ decay exponentially as $|x_1|\to\infty$.
%=====================================================================================================

The question of the behavior at infinity of solutions to elliptic equations in cylindrical
and conical domains attracted the attention of mathematicians since the middle of
20th century.  In \cite{LaPa79} for a divergence form elliptic operator in a semi-infinite cylinder
there is a unique (up to an additive constant) bounded solution. It stabilizes to a constant at infinity.
Similar problem for a convection-diffusion operator has been studied in \cite{Pia84}, \cite{PaPi}. In these works necessary and sufficient conditions for the uniqueness of a bounded solution were provided.
In \cite{FlKn92}, \cite{FKP92} and \cite{KnPa96} specific classes of semi-linear elliptic equations in a half-cylinder
were considered. It was shown in particular that a global solution, if it exists, decays at
least exponentially with large axial distance.
The behavior at infinity of solutions to some classes of elliptic systems, in particular
to linear elasticity was investigated in \cite{KnLu98}.
In \cite{OlYo77} the uniqueness issue was studied for solutions of second order elliptic equations
in unbounded domains under some dissipation type assumptions on the coefficients.
 The work \cite{KoOl85} deals with solutions of elliptic systems in a cylinder that  have a bounded
 weighted Dirichlet integral.
Paper \cite{Naz99} studies the existence of solutions of symmetric elliptic systems in weighted spaces with exponentially growing or decaying weights.

In \cite{ChMar-2015} the authors study a Neumann problem for a linear elliptic operator in divergence form in a growing family of finite cylinders. It has been proved that the solution of this problem converges to a unique solution of a Neumann problem in the infinite cylinder.

In \cite{KoOl96} nonlinear elliptic equations  with a dissipative
nonlinear zero order terms was studied in a half-cylinder.
Nonlinear elliptic equations in unbounded domains, solvability and qualitative properties of the solutions have been considered in \cite{DaCiGiPuel-2004}, \cite{BeCaNi-1997-1}, \cite{BeCaNi-1997-2}. Fredholm theory of elliptic problems in unbounded domains is presented in \cite{Volpert-2011}.

To our best knowledge the question of existence and uniqueness of a bounded
solution to a convection-diffusion equation in an infinite cylinder has not been addressed
in the existing literature.

The paper is organized as follows. In Section \ref{s_1} we state the problem and provide all the assumptions.
Section \ref{s 2} deals with the case $\bar b^+<0$ and $\bar b^->0$. The main result here is
Theorem \ref{Th.1} that states that for any two constants $K^+$ and $K^-$ there is a solution that
stabilizes exponentially to $K^\pm$ as $x_1\to \pm\infty$.
In Section \ref{s_3} we consider the cases $\bar b^-\le 0$ and $\bar b^+<0$ and $\bar b^+\ge0$ and $\bar b^->0$.
The main result here is the existence and uniqueness up to an additive constant of a bounded solution,
see Theorem \ref{Th.2}. It is also shown that this solution stabilizes at infinity to some constants
at exponential rate.

Section \ref{s_3} focuses on the case $\bar b^+>0$ and $\bar b^-<0$. We first prove that the homogeneous adjoint problem has a localized solution in $H^1(\mathbb G)$, see Theorem \ref{Th-main-P(x)}.  Then we prove that
problem \eqref{eq:or-prob} has a bounded solution if and only if the orthogonality condition \eqref{ort-cond} is fulfilled. This is the subject of Theorem \ref{Th.3}.

In Section \ref{s_4} we study the remaining cases: $\bar b^-<0$ and $\bar b^+=0$ ($\bar b^-=0$ and $\bar b^->0$) and $\bar b^+=0$ and $\bar b^-=0$. We show that a bounded solution to \eqref{eq:or-prob} exists if and only if the orthogonality condition \eqref{ort-cond} is satisfied, where $p$ is a function from the kernel of the adjoint operator which decays exponentially in the half cylinder  if the corresponding effective drift is not equal to zero, and stabilises to a periodic regime in the half cylinder where the effective drift is zero.

For the reader convenience, in Section~\ref{s_5} we summarize the results of \cite{PaPi} which we use throughout the paper.

%=====================================================================================================

\section{Problem statement}
\label{s_1}

Given a bounded domain $Q \subset \mathbb{R}^{d-1}$ with a Lipschitz boundary $\partial Q$, we denote by $\GG$ an infinite cylinder $\mathbb{R} \times Q$ with points $x = (x_1, x')$ and the axis directed along $x_1$. The lateral boundary of the cylinder is denoted by $\Sigma = \mathbb{R} \times \partial Q$. We study the following Neumann boundary value problem for a stationary convection-diffusion equation:
\begin{equation}
\label{eq:orig-prob}
  \left\{
   \begin{array}{lcr}
   \displaystyle{
    A \, u \equiv - {\rm div} \, (a(x) \, \nabla \, u(x)) + b(x)\cdot\nabla u(x) = f(x),}
    \quad \hfill \mbox{in}\,\, \GG,
    \\
    \displaystyle{
    B \, u \equiv a(x)\nabla u(x)\cdot n = g(x),} \quad \hfill \mbox{on}\,\, \Sigma.
   \end{array}
  \right.
\end{equation}
Here $v\cdot w = \sum_{i=1}^d v_i w_i$, $v, w \in \mathbb{R}^d$, denotes the standard scalar product in $\mathbb{R}^d$; $n$ is the exterior unit normal.

\begin{definition}
We say that a solution $u$ of problem \eqref{eq:orig-prob} is bounded if
\[
\|u\|_{L^2(G_N^{N+1})} \le C, \quad \forall N.
\]
The goal of the paper is to study the question of existence and uniqueness of a bounded solution to problem \eqref{eq:orig-prob}.
\end{definition}

Throughout the paper we use the notations
$$
G_\alpha^\beta = (\alpha, \beta) \times Q, \quad
\Sigma_\alpha^\beta = (\alpha, \beta) \times \partial Q,
\quad S_\alpha = \{\alpha\} \times Q.
$$
Our main assumptions:
\begin{itemize}
\item[\bf (H1)]
The coefficients $a_{ij}, b_j \in L^\infty(\GG)$ are periodic in $x_1$ outside the finite cylinder $G_{-1}^1$, that is
$$
a_{ij}(x) = \left\{
\begin{array}{c}
a_{ij}^{+}(x), \,\,x \in G_1^{+\infty},
\\[1mm]
\tilde{a}_{ij}(x), \,\, x \in G_{-1}^1,
\\[1mm]
a_{ij}^{-}(x), \,\,x \in G_{-\infty}^{-1};
\end{array}
\right.
\qquad
b_{j}(x) = \left\{
\begin{array}{c}
b_{j}^{+}(x), \,\,x \in G_1^{+\infty},
\\[1mm]
\tilde{b}_{j}(x), \,\, x \in G_{-1}^1,
\\[1mm]
b_{j}^{-}(x), \,\,x \in G_{-\infty}^{-1},
\end{array}
\right.
$$
where $a_{ij}^{+}, b_j^{+}$ and $a_{ij}^{-}, b_j^{-}$ are $1$-periodic with respect to $x_1$ in $G_1^{+\infty}$ and $G_{-\infty}^{-1}$, respectively.

\item[\bf(H2)]
The $d\times d$ matrix $a(x)$ is symmetric and satisfies the uniform ellipticity condition,
that is there exists a positive constant $\Lambda$ such that, for almost all $x \in \GG$,
\begin{equation}
\label{1.2}
    a(x) \, \xi \cdot \xi \ge \Lambda \, |\xi|^2,
    \quad  \forall \xi \in \mathbb{R}^d.
\end{equation}
\item[\bf (H3)]
The functions $f(x) \in L^2(\GG)$ and $g(x) \in L^2(\Sigma)$ decay exponentially to zero as $|x_1| \to \infty$. Namely, there exist positive constants $C_0, \gamma_0$ independent of $n$ such that
$$
\|f\|_{L^2(G_n^{n+1})}+
\|g\|_{L^2(\Sigma_n^{n+1})} \leq C_0 \, e^{-\gamma_0 \, |n|}, \,\, n \in \mathbb R.
$$
\end{itemize}

The presence of two periodic regimes in the two semi-infinite parts of the cylinder, $G_{-\infty}^0, G_0^{+\infty}$ makes the problem nontrivial.

The existence and uniqueness issue depends on the signs of the effective convection (effective drift) in the half-cylinders $G_{-\infty}^0$ and $G_0^{+\infty}$. The effective convection in the direction of $x_1$ for each periodic regime, $a_{ij}^+, b_j^+$ and $a_{ij}^-, b_j^-$, is defined as follows:
\begin{align}
\label{bar{b}}
\bar{b}^{\pm} = \int \limits_{Y} ( a_{1j}^\pm(x) \partial_j p^\pm(x) + b_1^\pm(x) \, p^\pm(x)) \, dx,
\end{align}
where $Y= \mathbb{T}^1 \times Q$ is the periodicity cell, $\mathbb{T}^1$ is a one-dimensional torus, and $p^\pm(y)$ belong to the kernels of adjoint periodic operators
\begin{equation}
\label{eq:p-peridic}
\left\{
\begin{array}{lcr}
- {\rm div}\, ( a^\pm \nabla \, p^\pm) -
{\rm div}\, ( b^\pm \, p^\pm) = 0, \quad \hfill y \in Y,
\\[1.5mm]
\displaystyle
a^\pm\nabla p^\pm\cdot n + (b^\pm\cdot n)  p^\pm = 0, \quad \hfill y \in \partial Y.
\end{array}
\right.
\end{equation}
Each of problems \eqref{eq:p-peridic} has a unique up to a multiplicative constant solution $p^\pm \in H^1(Y)\cap C(\overline Y)$ which is positive everywhere in $\overline{Y}$ (see, for example, \cite{PaPi}, Section~2).

The existence and the properties of solutions of problem \eqref{eq:orig-prob} depend crucially on the signs of $\bar{b}^{+}$ and $\bar{b}^{-}$. We are going to study problem \eqref{eq:orig-prob} for all possible combinations of signs of the effective drift in the two semi-cylinders:
\begin{enumerate}
\item
$\bar{b}^{+} < 0$, $\bar{b}^{-} > 0$ (two-parameter family of solutions to \eqref{eq:orig-prob});
\item
$\bar{b}^{+} < 0$, $\bar{b}^{-} \le 0$ (or $\bar{b}^{+} > 0$, $\bar{b}^{-} > 0$) (one-parameter family of solutions);
\item
$\bar{b}^{+} \ge 0$, $\bar{b}^{-} \le 0$ (non-existence in general case).
\end{enumerate}

%===========================================================================================================
\section{Case $\bar{b}^{+}<0, \quad \bar{b}^{-}>0$}
\label{s 2}

\begin{theorem}
\label{Th.1}
Let conditions $\bf(H1)-(H3)$ be fulfilled and suppose that $\bar{b}^{+}< 0$ and $\bar{b}^{-}> 0$. Then, for any constants $K^{+}$ and $K^{-}$, there exists a unique bounded solution $u(x)$ of problem \eqref{eq:orig-prob} that converges at exponential rate for some $\gamma >0$ to these constants, as $x_1 \to \pm \infty$:
\begin{equation}
\label{1.3}
\begin{array}{l}
\displaystyle
\|u - K^{-}\|_{L^2(G_{-\infty}^{-n})} + \|u - K^{+}\|_{L^2(G_n^{+\infty})} +
\|\nabla u\|_{L^2(\mathbb{G} \setminus G_{-n}^{n})} \le C\, M_1  \, e^{- \gamma \, n},
\\[2mm]
\displaystyle
\|\nabla u\|_{L^2(\GG)} \leq C\, M_1, \quad n \in \mathbb R_+.
\end{array}
\end{equation}
The constant $M_1$ in \eqref{1.3} have the form
\[
M_1= |K^{+} - K^{-}| + \|(1 + x_1^2)\,f\|_{L^2(\GG)} +
 \|(1 + x_1^2)\,g\|_{L^2(\Sigma)},
\]
where $C$ depends on $\Lambda, d$ and $Q$.

%\color{red}{$C$ in the estimate for the gradient depends on $\gamma$, and $\gamma$ in its turn depends on $\bar{b}$! Should we comment on this?}
\end{theorem}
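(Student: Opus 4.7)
The strategy is to reduce \eqref{eq:orig-prob} to an auxiliary problem with vanishing limits at $\pm\infty$ and exponentially localized data, to solve this auxiliary problem by finite cylinder approximation, and to derive uniform exponential decay through a weighted energy estimate built on the adjoint eigenfunctions $p^\pm$ from \eqref{eq:p-peridic}.

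First I would fix a smooth cut-off $\chi(x_1)$ vanishing for $x_1 \le -2$ and equal to $1$ for $x_1 \ge 2$, and seek $u$ in the form $u(x) = K^- + (K^+ - K^-)\chi(x_1) + w(x)$. Substitution into \eqref{eq:orig-prob} shows that $w$ solves a problem of the same type with source $\tilde f = f - (K^+-K^-)A\chi$ and boundary datum $\tilde g = g - (K^+-K^-)B\chi$; the extra terms are supported in $G_{-2}^2$ with norms controlled by $|K^+-K^-|$, so the new data still decay exponentially and the total data norm is bounded by $M_1$. It is therefore enough to construct a bounded solution $w$ that tends to zero at $\pm\infty$. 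I would obtain $w$ as the limit of solutions $w_N$ of the mixed boundary value problem on the finite cylinder $G_{-N}^N$ with the original Neumann condition on $\Sigma_{-N}^N$ and homogeneous Dirichlet condition on the end caps $S_{\pm N}$; each such finite problem is uniquely solvable in $H^1(G_{-N}^N)$ by the Fredholm alternative, since the maximum principle rules out nontrivial kernel on a finite cylinder.

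The technical heart of the argument is a uniform a priori bound
\[
\|w_N\|_{H^1(G_n^{n+1})} \le C\, M_1\, e^{-\gamma |n|}, \qquad |n| \le N-1,
\]
with $C$ and $\gamma$ independent of $N$. To derive it I would test the equation for $w_N$ against $w_N e^{2\gamma \psi(x_1)}$ with a weight $\psi$ growing linearly at infinity, combined with the positive adjoint eigenfunctions $p^\pm$ of \eqref{eq:p-peridic}. The sign conditions $\bar b^+ < 0$ and $\bar b^- > 0$ ensure, just as in the half-cylinder analysis of \cite{PaPi} summarised in Section~\ref{s_5}, that for sufficiently small $\gamma > 0$ the resulting weighted bilinear form is coercive, while the exponential decay of $\tilde f$ and $\tilde g$ yields a finite weighted norm on the right-hand side. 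The main obstacle is precisely the construction of this weight across the transition region $G_{-1}^1$, where the coefficients are non-periodic and the two supersolutions built from $p^+$ and $p^-$ must be glued together; the commutator terms coming from the gluing are localized to a bounded set and can be absorbed into the left-hand side by a standard cut-off argument together with the uniform ellipticity.

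Once the uniform estimate is established, a diagonal extraction produces a limit $w \in H^1_{\mathrm{loc}}(\mathbb{G})$ solving the auxiliary problem on the whole cylinder and inheriting the exponential decay, which together with the ansatz yields the solution $u$ satisfying \eqref{1.3}. Uniqueness of a bounded solution with prescribed limits $K^\pm$ follows by applying the same weighted estimate to the difference of two such solutions, which is a bounded solution of the homogeneous problem with zero limits at $\pm\infty$ and is thus forced to vanish.
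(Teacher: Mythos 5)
Your reduction via the ansatz $u=K^-+(K^+-K^-)\chi(x_1)+w$ is a reasonable and arguably cleaner alternative to the paper's splitting (homogeneous problem with prescribed limits at infinity plus a decaying particular solution), and the approximation by mixed problems on $G_{-N}^N$ matches the paper's scheme. The gap is precisely at what you call the technical heart: the uniform weighted coercivity. Test the equation against $p\,w_N e^{2\gamma\psi}$ with $p$ obtained by gluing $p^-$ and $p^+$, and integrate by parts. The drift contributions reorganize into $-\gamma\int \psi'\,(a_{1j}\partial_j p+b_1p)\,w_N^2 e^{2\gamma\psi}dx$, which is indeed favourable on average in each half-cylinder because of the signs of $\bar b^{\pm}$ (and even there only after replacing the flux density by its cell average plus a corrector), \emph{plus} a term of the form $\int_{G_{-2}^2}(a\nabla p+b\,p)\cdot\nabla(w_N^2)\,e^{2\gamma\psi}dx$ that survives because the glued $p$ does not solve the adjoint equation in the transition region. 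After Cauchy--Schwarz this leaves an indefinite zero-order term $C'\|w_N\|^2_{L^2(G_{-2}^2)}$ with $C'$ of order one, whereas the only positive zero-order term on the left-hand side is of size $c\gamma\|w_N\|^2$ with $\gamma$ small, and there is no Poincar\'e inequality forcing $w_N$ to be small on $G_{-2}^2$. So ``absorbed by a standard cut-off argument together with the uniform ellipticity'' does not go through. To close this you would need either a genuine positive adjoint supersolution interpolating between $p^-$ and $p^+$ (not available here: when $\bar b^+<0$ and $\bar b^->0$ positive global solutions of the adjoint problem grow exponentially, which is why the bounded adjoint kernel element $p$ of Theorem~\ref{Th-main-P(x)} exists only in the opposite sign configuration), or a compactness/contradiction argument trading the order-one local term for the triviality of the kernel, made uniform in $N$ — an additional argument you have not supplied.

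The paper sidesteps this entirely by never running a global energy estimate across the junction: it obtains a uniform $L^\infty$ bound on the finite-cylinder approximations by the maximum principle and the Harnack inequality (in the homogeneous case $|v^k|\le |K^+-K^-|/2$ is immediate from the cap data; in the nonhomogeneous case positivity plus a decomposition controls $\min_{G_{-1}^0}u_k$ by the data), and only then applies the half-cylinder results of \cite{PaPi} (Theorem~\ref{PP-Th-semi-inf} and Lemmas~\ref{PP-Th-v^k-1}--\ref{PP-Th-v^k-2}) on each side separately to upgrade boundedness to exponential stabilization. Your uniqueness step inherits the same gap, but that is easily repaired by the paper's two-line maximum-principle argument for a homogeneous solution decaying at both ends. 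A further minor point: since $a\in L^\infty$ only, $A\chi$ contains $-\mathrm{div}(a\nabla\chi)\in H^{-1}$ rather than $L^2$, so $\tilde f$ does not literally satisfy $\bf(H3)$; this is harmless in the weak formulation but should be stated.
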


\begin{proof}

Let us note that any bounded solution in $\mathbb G$ restricted to the left or right semi-infinite cylinder is a bounded solution there. Thus, by Theorem~\ref{PP-Th-semi-inf}, we conclude that every bounded solution (if it exists) stabilizes to some constants at $x_1 \to \pm \infty$.

Due to the linearity of problem \eqref{eq:orig-prob}, we can consider the homogeneous ($f=g=0$) and nonhomogeneous equations separately. At the first step we prove the existence of a solution to the homogeneous equation that stabilizes to some nonzero constants as $|x_1|\to \infty$. In the second step we show that there exists $u$ that solves the nonhomogeneous problem \eqref{eq:orig-prob} and decays to zero as $|x_1| \to \infty$.
%----------------------------------Homogeneous equation------------------------------------------------------
$ $\\ \textbf{ The case $f=g=0$.}
For two arbitrary constants $K^{+}, K^{-} \in \mathbf R$ and $k \in \mathbb R_{+}$, we consider the following sequence of the auxiliary boundary value problems:
\begin{align}
\label{2.2}
  \left\{
   \begin{array}{lcr}
   \displaystyle{
    A \, u^k = 0,}
    \quad \hfill x \in G_{-k}^k,
    \\[1mm]
    \displaystyle{
    B \, u^k = 0,} \quad \hfill x \in \Sigma_{- k}^k,
    \\[1mm]
    \displaystyle
    u^k( \pm k, x') = K^{\pm},\quad \hfill x' \in Q.
   \end{array}
  \right.
\end{align}
We assume that $K^{+} \neq K^{-}$, otherwise the result of the theorem is trivial: $u \equiv K^{+}$. Without loss of generality we assume that $K^{+}> K^{-}$. Denote $\displaystyle v^k=u^k - \frac{K^{+}+K^{-}}{2}$. Then $v^k$ solves the problem
\begin{align}
\label{eq:v^k}
\left\{
   \begin{array}{lcr}
   \displaystyle{
    A \, v^k = 0,}
    \quad \hfill x \in G_{-k}^k,
    \\[1mm]
    \displaystyle{
    B \, v^k = 0,} \quad \hfill x \in \Sigma_{- k}^k,
    \\[1mm]
    \displaystyle
    v^k( \pm k, x') = \pm \, \frac{1}{2} \, (K^{+} -K^{-}), \quad \hfill x' \in Q.
   \end{array}
\right.
\end{align}
By the maximum principle,
\begin{equation}
\label{2.3}
|v^k| \leq \frac{1}{2}\, |K^{+} -K^{-}|, \quad x \in G_{-k}^k, \quad k \in \mathbb R_{+}.
\end{equation}
Indeed, by the maximum principle, a negative minimum cannot be attained in the interior of the domain $G_{-k}^k$. The assumption that a negative minimum is attained on the lateral boundary $\Sigma_{-k}^k$ also contradicts the maximum principle. Indeed, one can prove this extending $v_k$ by reflection across the lateral boundary and using the fact that $v_k$ satisfies homogeneous Neumann boundary condition on $\Sigma_{-k}^k$. This argument is used many times throughout the paper and allows us to apply the maximum principle, the Harnack inequality and Nash estimates up to the lateral boundary of the cylinder.

It follows directly from \eqref{2.3} that the $L^2(G_{N}^{N+1})$-norm of $v^k$ is bounded, and by the elliptic estimates (see \cite{GilTrud}, Ch.8, problem 8.2), the norm of $\nabla v^k$ is also bounded in each finite cylinder:
\[
\|v^k\|_{L^2(G_{N}^{N+1})} + \|\nabla v^k\|_{L^2(G_{N}^{N+1})} \le C\, |K^{+} -K^{-}|, \quad N \in \mathbb R,
\]
with $C$ independent of $N$. Here we extend $v^k$ by constants $\pm (K^+ - K^-)/2$ outside $G_{-k}^k$.
Consequently, we obtain
\begin{align}
\label{2.4}
\|u^k\|_{L^2(G_N^{N+1})} & \le C \, \big( |K^{+}| +  |K^{-}| \big),
\\
\|\nabla \, u^k\|_{L^2(G_N^{N+1})} & = \|\nabla \, v^k\|_{L^2(G_N^{N+1})}
\leq C \, |K^{+} -K^{-}|, \,\, N \in \mathbb R,
\end{align}
where the constant $C$ depends only on $\Lambda, d$ and $Q$. By the compactness of embedding $H^1(G_\alpha^\beta) \Subset L^2(G_\alpha^\beta)$, we conclude that, up to a subsequence, $u^k$ converges to a solution $u$ to problem \eqref{eq:orig-prob} (with $f=g=0$) strongly in $L_{loc}^2(\GG)$ and $\nabla \, u^k \rightharpoonup \nabla\, u$ weakly in $(L_{loc}^2(\GG))^d$, as $k \to \infty$. This proves the existence of a solution $u \in H_{loc}^1(\GG)$ to \eqref{eq:orig-prob}.

%By the trace theorem
%$$
%\|u^k\|_{H^{\frac{1}{2}}(S_{\pm 1})} \leq C\, (|K^{+}| +|K^{-}|),
%$$
Note that the H\"{o}lder norm of $u^k$ in each cylinder of fixed length is bounded (see \cite{GilTrud}, Theorem 8.24):
\begin{align}
\label{1.8}
\|u^k\|_{C^\alpha(\overline{G_{N}^{N+1}})} \leq C \|u^k\|_{L^2(G_{N-1}^{N+2})} \le
C \,( |K^{+}| + |K^{-}|), \quad \forall N,
\end{align}
with $\alpha > 0$ and a constant $C$ depending only on $d$, $Q$ and $\Lambda$.

%Here again we used the same extension arguments as above that allowed us to obtain estimate \eqref{1.8} in $G_{N}^{N+1}$ up to the lateral boundary $(N, N+1) \times \partial Q$, not only in the interior part.

Due to \eqref{1.8}, $u^k$ converges to $u$ uniformly in each finite cylinder $G_N^{N+1}$, as $k \to \infty$, and
\[
|u| \le C \,( |K^{+}| + |K^{-}|), \quad x \in \GG.
\]

We proceed with the exponential stabilization of $u$, as $x_1 \to \infty$.

Let us compare the solution $v^k$ of \eqref{eq:v^k} with a solution $\hat{v}^k $ to the following problem in the semi-infinite cylinder
\begin{align}
\label{2.5}
  \left\{
   \begin{array}{lcr}
   \displaystyle{
    A \, \hat{v}^k = 0,}
    \quad \hfill x \in G_{1}^k,
    \\[1mm]
    \displaystyle{
    B \, \hat{v}^k = 0,} \quad \hfill x \in \Sigma_1^k,
    \\[1mm]
    \displaystyle
    \hat{v}^k( 0, x') = - |K^{+}-K^{-}|/2, \quad \hat{v}^k(k, x') = (K^{+}-K^{-})/2, \quad \hfill x' \in Q.
   \end{array}
  \right.
\end{align}
By the maximum principle, $v^k \geq \hat{v}^k$  and $\hat{v}^k - \frac{K^{+}-K^{-}}{2} < 0$ in $G_0^k$. By Theorem~\ref{PP-Th-semi-inf}, in the case $\bar{b}^{+} < 0$, the following estimate is valid:
\[
|u^k - K^{+}|=|v^k - \frac{K^{+}-K^{-}}{2}| \leq |\hat{v}^k - \frac{K^{+}-K^{-}}{2}| \leq C\, |K^{+} - K^{-}| \, e^{- \gamma x_1}, \quad x \in G_1^k.
\]
Since, up to a subsequence, $\{u^k\}$ converges to $u$ uniformly on every compact set $K \subset \GG$, then
\[
|u - K^{+}| \leq C\, |K^{+} - K^{-}| \, e^{- \gamma x_1}, \quad x\in G_1^{+\infty}.
\]
The last estimate yields
\[
\|u-K^{+}\|_{L^2(G_N^{N+1})} \le C\,  |K^{+} - K^{-}|\, e^{- \gamma N}, \quad N=1,...,k-1.
\]
By the elliptic estimates we obtain
$$
\|\nabla u\|_{L^2(G_N^{N+1})} \le C \|u-K^{+}\|_{L^2(G_{N-1}^{N+2})} \le C\,  |K^{+} - K^{-}|\, e^{- \gamma N}, \quad N=1,...,k-1.
$$
The convergence of $u$ to $K^{-}$, as $x_1 \to -\infty$, is proved in the same way.

%This implies that, for any constants $K^\pm$, there is a solution $u(x)$ of \eqref{eq:orig-prob} with $f=g=0$ such that $u\to K^{\pm}$, as $x_1 \to \pm \infty$ and
%\begin{equation}
%\label{2.6}
%\begin{array}{c}
%|u(x) - K^{+}| \leq C\, |K^{+} - K^{-}| \, e^{- \gamma x_1}, \quad x\in G_0^{+\infty},
%\\[1mm]
%|u(x) - K^{-}| \leq C\, |K^{+} - K^{-}| \, e^{\gamma x_1}, \quad x\in G_{-\infty}^0,
%\\[1mm]
%\displaystyle
%\|u\|_{L^2(G_N^{N+1})} \le C \, \max\{|K^{+}|, |K^{-}|\}, \quad \forall N,
%\\[2mm]
%\displaystyle
%\|\nabla u\|_{L^2(G_N^{N+1})} \le C\,  |K^{+} - K^{-}|\, e^{- \gamma N}, \quad \|\nabla \, u\|_{L^2(\GG)} \leq C\, |K^{+} -K^{-}|.
%\end{array}
%\end{equation}

%--------------------------Nonhomogeneous equation------------------------------------
\textbf{The case when at least one of the functions $f$ or $g$ not zero.}

We prove the existence of a solution of the nonhomogeneous problem \eqref{eq:orig-prob} that decays exponentially at infinity. To this end we consider the following problems:
\begin{equation}
\label{1.11}
  \left\{
   \begin{array}{lcr}
   \displaystyle{
    A\, u_k =  f(x),}
    \quad \hfill x \in G_{-k}^k,
    \\[1mm]
    \displaystyle{
    B\, u_k = g(x),} \quad \hfill x \in \Sigma_{- k}^k,
    \\[1mm]
    \displaystyle{
    u_k( - k, x') = u_k(k, x') = 0}, \quad \hfill x' \in Q.
   \end{array}
  \right.
\end{equation}
Without loss of generality we assume that $f(x)\ge 0$ and $g(x)\ge 0$, otherwise we represent these functions as two sums of their positive and negative parts. Moreover, we assume that ${\rm supp}\, f, {\rm supp}\,g \subset G_0^{+\infty}$. The case when the supports of $f$ and $g$ are in $G_{-\infty}^0$ can be considered similarly.

Suppose first that the coefficients $a_{ij}, b_j$ and the functions $f$ and $g$ are smooth. Thus, by the strong maximum principle (see, for example, \cite{GilTrud}), $u^k(x) > 0$, $x \in G_{-k}^k \cup \Sigma_{-k}^k$.
%\marginpar{Here the directions of $\bar{b}$ are important!}

Due to Lemma~\ref{PP-Th-v^k-1}, in the semi-infinite cylinder $G_{-\infty}^{-1}$, where $\bar b^- > 0$, $u_k$ decays exponentially and the following estimate holds:
$$
u_k(x_1, x') \leq C_0 \, \|u_k\|_{L^{\infty}(S_{-1})} \, e^{\gamma\, x_1}, \quad x_1 < -1, \,\, \gamma>0,
$$
where $C_0$ depends only on $\Lambda, d$ and $Q$.
Since $u_k>0$, by the Harnack inequality, there exists $\alpha$ which depends only on $d, Q$ and $\Lambda$ such that
$$
u_k(x)\leq \alpha \, e^{\gamma \, x_1} \, \min \limits_{\overline G_{-1}^0} u_k(x), \quad x \in G_{-\infty}^{-1}.
$$
Obviously, there exists $\xi > 1$ such that
\begin{equation}
\label{1.12}
u_k(-\xi, x') < \frac{|Q|}{2}\, \min \limits_{\overline G_{-1}^0} u_k(x).
\end{equation}

Due to the linearity of the problem in $G_{-\xi}^k$ we represent $u_k$ as a sum $v_k + w_k$, where $v_k$ is a solution of the homogeneous equation with nonzero Dirichlet boundary conditions
\begin{equation}
\label{1.13}
  \left\{
   \begin{array}{lcr}
   \displaystyle{
    A\, v_k =  0,}
    \quad \hfill x \in G_{-\xi}^k,
    \\[1mm]
    \displaystyle{
    B\, v_k = 0,} \quad \hfill x \in \Sigma_{- \xi}^k,
    \\[1mm]
    \displaystyle{
    v_k( - \xi, x') = u_k( - \xi, x'), \,\, v_k(k,x') = 0}, \quad \hfill x' \in Q;
   \end{array}
  \right.
\end{equation}
and $w_k$ is a solution of the problem
\begin{equation}
\label{1.14}
  \left\{
   \begin{array}{lcr}
   \displaystyle{
    A\, w_k =  f(x),}
    \quad \hfill x \in G_{-\xi}^k,
    \\[1mm]
    \displaystyle{
    B\, w_k = g(x),} \quad \hfill x \in \Sigma_{- \xi}^k,
    \\[1mm]
    \displaystyle{
    w_k( - \xi, x') = w_k(k,x') = 0}, \quad \hfill x' \in Q.
   \end{array}
  \right.
\end{equation}
By the maximum principle we have
$$
v_k(x) \leq \frac{|Q|}{2} \, \min \limits_{\overline G_{-1}^0} u_k(x), \quad x \in G_{-\xi}^k.
$$
By Lemma~\ref{PP-Th-v^k-2}, a solution $w_k$ of problem \eqref{1.14} satisfies the following estimate:
$$
\|w_k\|_{L^2(G_N^{N+1})} \leq C \, \|(1 + x_1^2) \, f\|_{L^2(G_0^{+\infty})} + C \, \|(1 + x_1^2) \, g\|_{L^2(\Sigma_0^{+\infty})}.
$$
Thus,
\begin{align*}
|Q| \min \limits_{G_{-1}^0} u_k(x) & \leq \|u_k\|_{L^2(G_{-1}^0)}
\\
& \leq \|v_k\|_{L^2(G_{-1}^0)} + \|w_k\|_{L^2(G_{-1}^0)} \leq
\frac{|Q|}{2} \, \min \limits_{G_{-1}^0}u_k(x) + \|w_k\|_{L^2(G_{-1}^0)}.
\end{align*}
It follows from the last inequality that
\begin{equation}
\label{1.15}
\min \limits_{G_{-1}^0} u_k(x) \leq C ( \|(1 + x_1^2) \, f\|_{L^2(G_0^{+\infty})} +  \|(1 + x_1^2) \, g\|_{L^2(\Sigma_0^{+\infty})}),
\end{equation}
where $C = C(\Lambda, d, Q)$.
With the help of the Harnack inequality, maximum principle and \eqref{1.15} we get
\begin{align*}
|u_k(x)| & \le C ( \|(1 + x_1^2) \, f\|_{L^2(G_0^{+\infty})} +  \|(1 + x_1^2) \, g\|_{L^2(\Sigma_0^{+\infty})})\, e^{\gamma x_1}, \quad x \in G_{-k}^{-1}.
\end{align*}
Note that $u_k$ is smooth in $G_{-k}^{-1}$ where it solves a homogeneous problem.

It remains to apply Lemma~\ref{PP-Th-v^k-1} and Lemma~\ref{PP-Th-v^k-2}. According to these results, for $\bar{b}^{-} >0$ and $\bar{b}^{+} <0$, we obtain
\begin{align*}
\|u_k\|_{L^2(G_N^{N+1})} & \le C\, M, \,\, \forall N>0;
\\
\|u_k\|_{L^2(G_N^{N+1})} & \le C\, M \, e^{-\gamma N}, \,\, \forall N<0;
\\
\|\nabla u_k\|_{L^2(G_{-k}^k)} & \le C \, M,
\end{align*}
where the constant $M$ has the form
$$
M = \|(1 + x_1^2) \, f\|_{L^2(G_0^{+\infty})} + \|(1 + x_1^2) \, g\|_{L^2(\Sigma_0^{+\infty})}.
$$
For the nonsmooth data the desired estimates can be justified by means of a smoothening procedure.

Thus, one can see that, up to a subsequence, $\{u_k\}$ (being extended by zero to the whole cylinder $\GG$), converges weakly in $H_{loc}^1(\GG)$ to a solution $u$ of problem \eqref{eq:orig-prob}. Moreover, by Thereom~\ref{PP-Th-semi-inf}, $u$ stabilizes exponentially to some constants as $x_1 \to \pm \infty$. One can show that by construction $u$ actually decays exponentially, as $x_1 \to \pm \infty$, but it is of no importance at this stage.

As was shown above, for any constants $K^\pm$, there exists a solution of homogeneous equation, stabilizing to these constants at infinity and satisfying estimates \eqref{1.3}. Summing up such a solution with the particular solution $u(x)$ of the non-homogeneous equation, we obtain the desired solution of nonhomogeneous problem.

The uniqueness of a solution for fixed constants $K^\pm$ follows from the maximum principle. Assume that there exists $u$ solving \eqref{eq:orig-prob} with $f=g=0$ and $u \to 0$ as $x_1 \to \pm \infty$. Let us restrict $u$ on $G_{-k}^k$. Due to the exponential decay, $|u(\pm k, x')| \le C e^{-\gamma k}$ for any $k$. By the maximum principle, $|u| \le C e^{-\gamma k}$ everywhere in $G_{-k}^k$ for any $k$, which implies that $u=0$.

Theorem \ref{Th.1} is proved.

\end{proof}
%------------------------------------b_1^{+}>0, b_1^{-} > 0--------------------------------------------------------

\section{The case $\bar{b}^{-}\le 0, \,\, \bar{b}^{+} < 0$ ($\bar{b}^{-}> 0, \,\, \bar{b}^{+} \ge 0$).}
\label{s_3}

\begin{theorem}
\label{Th.2}
Suppose that conditions $\bf(H1)$--$\bf(H3)$ are fulfilled and $\bar{b}^{-}\le 0, \,\, \bar{b}^{+} < 0$ ($\bar{b}^{-}> 0, \,\, \bar{b}^{+} \ge 0$). Then there exists a unique, up to an additive constant, bounded solution $u(x)$ of problem \eqref{eq:orig-prob}. This solution, for some constant $K^{-}$, satisfies the bounds
\begin{align}
\label{eq:est-u-case2}
\|u - K^{-}\|_{L^2(G_{N-1}^{N})} & \le C\, M \, e^{\gamma \, N}, \quad N < 0, \notag
\\
\|u\|_{L^2(G_N^{N+1})} & \le C\, M \, e^{- \gamma \, N}, \quad N > 0,
\\
\|\nabla \, u\|_{L^2(\GG)} & \le  C M. \notag
\end{align}
Here the constant $M$ is given by
\begin{align}
\label{M}
M= \|(1 + x_1^2)\,f\|_{L^2(\GG)} +
 \|(1 + x_1^2)\,g\|_{L^2(\Sigma)},
\end{align}
and $C$ only depend on $\Lambda, d$ and $Q$.
\end{theorem}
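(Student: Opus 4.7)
By the reflection $x_1 \mapsto -x_1$, which reverses the sign of both effective drifts, the two cases of the statement are interchanged, so it suffices to treat $\bar b^- \le 0$, $\bar b^+ < 0$. The overall strategy parallels Theorem~\ref{Th.1}: decompose $f = f^+ - f^-$, $g = g^+ - g^-$ into positive and negative parts and by linearity assume $f, g \ge 0$; then construct $u$ as a weak $H^1_{loc}$-limit of solutions $u_k$ of the truncated Dirichlet--Neumann problem
\begin{equation*}
A u_k = f \text{ in } G_{-k}^k, \qquad B u_k = g \text{ on } \Sigma_{-k}^k, \qquad u_k(\pm k, x') = 0,
\end{equation*}
each well-posed in $H^1$ by Lax--Milgram and satisfying $u_k \ge 0$ by the strong maximum principle. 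On the right half-cylinder $\bar b^+ < 0$ allows the argument from the nonhomogeneous step of the proof of Theorem~\ref{Th.1} to be applied verbatim via Lemma~\ref{PP-Th-v^k-2}, yielding $\|u_k\|_{L^2(G_N^{N+1})} \le C M e^{-\gamma N}$ for $N > 0$ uniformly in $k$.

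The new difficulty is the left half-cylinder, where the sign $\bar b^- \le 0$ no longer provides exponential damping toward $x_1 = -k$; instead $u_k$ is expected to stabilize to some constant $K_k^-$ which must itself be controlled. To this end I would first use Harnack together with the already-established exponential decay on the right to bound $\|u_k\|_{L^\infty(G_{-1}^0)}$ by $CM$ independently of $k$. Then on $G_{-k}^0$ I would decompose $u_k = v_k + w_k$, where $w_k$ solves the inhomogeneous problem with zero Dirichlet data at both ends and is controlled by Lemma~\ref{PP-Th-v^k-2} in terms of $M$, while $v_k$ solves the homogeneous problem with $v_k(-k, x') = 0$ and $v_k(0, x') = u_k(0, x')$. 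Since $\bar b^- \le 0$ is precisely the regime in which bounded homogeneous solutions on the left half-cylinder are determined, and uniformly bounded, by their Dirichlet trace at the finite end (see the semi-infinite results collected in Section~\ref{s_5}), $v_k$ is bounded by the maximum principle in terms of $\|u_k(0,\cdot)\|_{L^\infty(Q)}$, hence by $CM$. Combining yields $\|u_k\|_{L^2(G_N^{N+1})} \le CM$ for $N < 0$ uniformly in $k$. Passing to a weak $H^1_{loc}$-limit produces a bounded solution of \eqref{eq:orig-prob}, and Theorem~\ref{PP-Th-semi-inf} applied on each half-cylinder supplies the exponential stabilization to $K^-$ at $-\infty$ and to $0$ at $+\infty$ asserted in \eqref{eq:est-u-case2}; the gradient bound on $\GG$ then follows from a Caccioppoli estimate combined with the pointwise $L^2$-decay just obtained.

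For uniqueness up to an additive constant, let $w$ be a bounded solution of the homogeneous problem. Theorem~\ref{PP-Th-semi-inf} gives $w \to K^\pm$ as $x_1 \to \pm\infty$, with $w - K^+$ decaying exponentially on the right since $\bar b^+ < 0$. Because $\bar b^- \le 0$, the semi-infinite uniqueness result from \cite{PaPi} recalled in Section~\ref{s_5} characterises bounded homogeneous solutions on the left half-cylinder by their Dirichlet trace at $x_1 = 0$; matching $w - K^+$ across $x_1 = 0$ with the constant solution $K^- - K^+$ on the left, and using the exponential decay on the right, forces $K^+ = K^-$. Once $w - K^+$ tends to zero at both infinities, the maximum principle on $G_{-k}^k$ as $k \to \infty$ gives $w \equiv K^+$. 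I expect the main obstacle to be the uniform-in-$k$ control of $u_k$ on the left half-cylinder: in contrast to Theorem~\ref{Th.1}, no exponential damping is available from the truncation boundary at $x_1 = -k$, so the bound must be extracted indirectly, through the decomposition $u_k = v_k + w_k$ and the semi-infinite stabilization/uniqueness theory of \cite{PaPi}.
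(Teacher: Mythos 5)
Your overall architecture (truncated problems with zero Dirichlet data at $S_{\pm k}$, reduction to $f,g\ge 0$, splitting into the two half-cylinders, semi-infinite stabilization for the limit, and a uniqueness argument via the trace at a far-right cross-section) matches the paper, and your treatment of the left half-cylinder \emph{given} a uniform bound on $u_k(0,\cdot)$ is correct. But there is a genuine gap at the single decisive step: the uniform-in-$k$ bound on $\|u_k\|_{L^\infty(S_0)}$ (equivalently on $\min_{\overline Q}u_k(0,\cdot)$, to which the maximum is comparable by Harnack). You derive it from ``Harnack together with the already-established exponential decay on the right,'' where that decay is in turn claimed to follow ``verbatim'' from the nonhomogeneous step of Theorem~\ref{Th.1}. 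This is circular. In Theorem~\ref{Th.1} the bound on $\min_{\overline{G_{-1}^0}}u_k$ is extracted by exploiting $\bar b^->0$: the positive solution $u_k$ decays exponentially as $x_1\to-\infty$, so there is a cross-section $S_{-\xi}$ on which $u_k$ is below $\tfrac12\min_{\overline{G_{-1}^0}}u_k$, and the decomposition on $G_{-\xi}^k$ then closes an absorption inequality. When $\bar b^-\le 0$ no such small cross-section exists on the left (by Lemma~\ref{PP-Th-v^k-1}, on the homogeneous left part $u_k$ is close to a constant, or to a linear profile, comparable to its trace near the origin), so that mechanism cannot be transplanted; Harnack alone only compares values of the positive solution and cannot produce an absolute bound; and Lemma~\ref{PP-Th-v^k-2} only controls the piece of $u_k$ with zero Dirichlet traces, whereas the piece carrying the unknown trace $u_k(0,\cdot)$ is precisely what must be estimated.

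The paper closes this gap with a different absorption argument that your proposal would need to supply. For $\mathrm{supp}\,f,\,\mathrm{supp}\,g\subset G_\eta^{+\infty}$, write $u_k=v_k+w_k$ on $G_0^k$ with $v_k$ carrying the data and zero Dirichlet traces (so $\|v_k\|\le CM$ by Lemma~\ref{PP-Th-v^k-2}) and $w_k$ homogeneous with $w_k(0,\cdot)=u_k(0,\cdot)$, $w_k(k,\cdot)=0$. Since $\bar b^+<0$, $w_k$ decays exponentially, so $w_k(\eta,\cdot)\le\delta\min_{\overline Q}u_k(0,\cdot)$ for suitable $\eta=\eta(\delta)$. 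On the other hand, because $u_k$ is a positive solution of the homogeneous problem on $G_{-k}^{\eta}$ vanishing on $S_{-k}$, the function $x_1\mapsto\min_{x'}u_k(x_1,x')$ is nondecreasing there, whence $\|u_k\|_{L^2(G_{\eta-1}^{\eta})}\ge|Q|\min_{\overline Q}u_k(0,\cdot)$. Combining these and choosing $\delta$ with $\alpha'\delta<1/2$ absorbs the $w_k$ contribution and yields $\min_{\overline Q}u_k(0,\cdot)\le CM$; a symmetric variant, locating an exponentially small cross-section $S_\xi$ in the region where $u_k$ is homogeneous on the right, handles $\mathrm{supp}\,f,\,\mathrm{supp}\,g\subset G_{-\infty}^{\eta}$. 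Without some version of this step your a priori estimates do not close; the rest of your outline (left-half decomposition, passage to the limit, uniqueness) is sound.
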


\begin{remark}
Note that in the case  $\bar{b}^{-}\le 0, \,\, \bar{b}^{+} < 0$ there exists a unique, up to an additive constant, solution to problem \eqref{eq:orig-prob} with $f=g=0$ which is equal to zero (so as $K^{-}$). Indeed, the solution is unique by Theorem~\ref{Th.2} and $u=0$ is a solution.
\end{remark}
\begin{proof}
We will prove Theorem \ref{Th.2} in the case $\bar{b}^{-}\le 0, \,\, \bar{b}^{+} < 0$. The case $\bar{b}^{-}> 0, \,\, \bar{b}^{+} \ge 0$ is treated in a similar way.

We prove the existence of a solution to \eqref{eq:orig-prob} by considering the auxiliary problems in finite cylinders
\begin{align}
\label{eq:fin-cyl-case2}
  \left\{
   \begin{array}{lcr}
   \displaystyle{
    A\, u_k =  f,}
    \quad \hfill x \in G_{-k}^k,
    \\[1mm]
    \displaystyle{
    B\, u_k = g,} \quad \hfill x \in \Sigma_{-k}^k,
    \\[1mm]
    \displaystyle
    u_k(-k, x') = u_k(k, x') = 0, \quad \hfill x' \in Q.
   \end{array}
  \right.
\end{align}
If both $f$ and $g$ are equal to zero, the problem is trivial: $u=\rm const$ is a solution to \eqref{eq:orig-prob}. We focus on the case when at least one of these functions is not zero. Without loss of generality we can assume that $f, g \geq 0$. Otherwise we represent them as the sums of positive and negative parts and repeat the argument. In addition we assume that the coefficients of the equation $a_{ij}$, $b_j$, as well as the functions $f,\,g$ are smooth. The case of nonsmooth data is justified by means of smoothing. Then by the maximum principle $u_k > 0$ in $G_{-k}^k$ up to the lateral boundary.

We will consider two cases: ${\rm supp} f, {\rm supp} \,\, g \subset G_\eta^{+\infty}$ and ${\rm supp} \, f, {\rm supp}\,\,g \subset G_{-\infty}^\eta$ for some $\eta >0$ which will be chosen later.

\bigskip

Let now ${\rm supp}\: f, {\rm supp}\:g \subset G_{\eta}^{+\infty}$. To separate difficulties, as before, we represent the solution $u_k$ in the cylinder $G_0^k$ as the sum $u_k = v_k + w_k$, where $v_k$ and $w_k$ are solutions of the following problems:
\[
  \left\{
   \begin{array}{lcr}
   \displaystyle{
    A\, v_k = f(x),}
    \quad \hfill x \in G_{0}^k,
    \\[1mm]
    \displaystyle{
    B\,v_k = g(x),} \quad \hfill x \in \Sigma_0^k,
    \\[1mm]
    \displaystyle{
    v_k(0, x') = v_k(k, x') = 0}, \,\, \hfill x' \in Q;
   \end{array}
  \right.
\]
\[
  \left\{
   \begin{array}{lcr}
   \displaystyle{
    A\, w_k = 0,}
    \quad \hfill x \in G_{0}^k,
    \\[1mm]
    \displaystyle{
   B\,w_k = 0,} \quad \hfill x \in \Sigma_0^k,
    \\[1mm]
    \displaystyle{
    w_k(0, x') = u_k(0, x'), \,\, w_k(k, x') = 0}, \quad \hfill x' \in Q.
   \end{array}
  \right.
\]
Due to Lemma~\ref{PP-Th-v^k-2}, $v_k$ satisfies the following estimate:
\begin{align}
\label{3.4}
&\|v_k\|_{L^2(G_N^{N+1})}  + \|\nabla v_k\|_{L^2(G_N^{N+1})}\le C\, M, \quad N >0,
\end{align}
where $C = C(\Lambda, d, Q)$ and $M$ is given by \eqref{M}.

It is left to show that $\|u_k\|_{L^\infty(S_0)}$ is bounded. Then by the maximum principle it will follow immediately that $\|w_k\|_{L^\infty(G_0^k)}$ is bounded.

Since $\bar b^{+}<0$, $w_k$ decays exponentially with $x_1$, and for any $\delta$ we can choose $\eta = \eta(\delta)>0$ such that
\begin{align}
\label{3.5}
w_k(\eta,x') \leq  \delta \, \min \limits_{\overline Q} u_k(0,x').
\end{align}

On the other hand,
\[
\|u_k\|_{L^2(G_{\eta-1}^\eta)} \ge |Q| \min \limits_{\overline G_{\eta-1}^\eta} u_k.
\]
Since $u_k(-k,x')=0$ and $u_k$ solves a homogeneous problem in $G_{-k}^{\eta}$, then \\ $\min_{x' \in \overline Q}u_k(x_1,x')$ is an increasing function of $x_1$ on $(-k, \eta)$.

Indeed, $\min_{x' \in \overline Q}u_k(x_1,x')$ cannot attain a nonnegative minimum inside $G_{-k}^{\eta}$, which yields that it is either increasing or decreasing starting from some point ($\min_{x' \in \overline Q}u_k(x_1,x')$ might have one local maximum). But in the latter case $\max_{x' \in \overline Q}u_k(x_1,x')$ is also decreasing, which is impossible since $u_k(-k,x')=0$ and $u_k > 0$ in $G_{-k}^k$.

Thus
\[
\|u_k\|_{L^2(G_{\eta-1}^\eta)} \ge |Q| \min \limits_{\overline Q} u_k(0,x').
\]
Using \eqref{3.5}, the Harnack inequality and the maximum principle we obtain
\begin{align*}
|Q| \min \limits_{\overline Q} u_k(0,x')
\le \|u_k\|_{L^2(G_{\eta-1}^\eta)}
&\le \|v_k\|_{L^2(G_{\eta-1}^\eta)} + \|w_k\|_{L^2(G_{\eta-1}^\eta)}\\
&\le C\, M + |Q| \max \limits_{\overline G_{\eta-1}^\eta} w_k\\
&\le C\, M+\alpha' |Q| w_k(\eta,x') \\
& \le C\, M + \alpha' |Q| \delta \min \limits_{\overline Q} u_k(0,x'),
\end{align*}
where $M$ is given by \eqref{M}, $\alpha'>0$ is the constant from the Harnack inequality for $w_k$ and $\delta$ is defined in \eqref{3.5}; $C$ depends on $\Lambda, d, Q$.
We chose $\delta$ such that $\alpha' \delta < 1/2$ and get
\[
\min \limits_{\overline Q} u_k(0,x') \le C\, M.
\]
Note that $\delta$ only depends on $\Lambda, Q$ and $d$.
Now we can simply consider $u_k$ in two cylinders, $G_{-k}^0$ and $G_0^k$, separately to obtain
\begin{align}
\|u_k\|_{L^2(G_N^{N+1})}  + \|\nabla u_k\|_{L^2(G_{-k}^k)} \le C\, M.
\end{align}
The last estimate imply that, up to a subsequence, $u_k$ converges weakly in $H_{loc}^1(\GG)$, as $k \to \infty$, to a solution $u$ of problem in the infinite cylinder \eqref{eq:orig-prob}. Due to Theorem~\ref{PP-Th-semi-inf}, the restrictions of $u(x)$ to the semi-infinite cylinders $G_{-\infty}^0$ and $G_0^\infty$ stabilize at the exponential rate to some constants, as $x_1 \to \mp\infty$.

\bigskip

Let now ${\rm supp} f, {\rm supp}\,\, g \subset G_{-\infty}^{\eta}$. Note that we have chosen $\eta$, which might be large, but it depends only on $Q, \Lambda$ and $d$. As before, we consider auxiliary problem \eqref{eq:fin-cyl-case2}, and the first step is to derive estimates for $u_k$ in $G_\eta^{\eta+1}$. The function $u_k$ solves a homogeneous problem in $G_\eta^k$, and since $\bar{b}^{+} < 0$ then $u_k$ decays exponentially with growing $x_1$, and there exists $\xi>\eta$ such that
\[
 |u_k(\xi, x')| \leq C_0 \, \|u_k\|_{L^\infty(S_\eta)} \, e^{- \gamma \xi} \leq
 C_0 \, \alpha \, e^{- \gamma \xi} \min \limits_{\overline G_\eta^{\eta+1}} u_k(x) < {|Q|\over2} \min \limits_{\overline G_\eta^{\eta+1}} u_k(x).
\]
In $G_{-k}^{\xi}$ the function $u_k$ can be represented as a sum $u_k = v_k + w_k$, where $w_k$ solves a homogeneous problem with induced boundary conditions
\begin{equation}
\label{1.16}
  \left\{
   \begin{array}{lcr}
   \displaystyle{
    A\, w_k =  0,}
    \quad \hfill x \in G_{-k}^\xi,
    \\[1mm]
    \displaystyle{
    B\, w_k = 0,} \quad \hfill x \in \Sigma_{-k}^{\xi},
    \\[1mm]
    \displaystyle{
    w_k(\xi, x') = u_k( \xi, x'), \,\, w_k(-k,x') = 0}, \quad \hfill x' \in Q;
   \end{array}
  \right.
\end{equation}
and $v_k$ is a solution of the nonhomogeneous equation with homogeneous boundary conditions
\begin{equation}
\label{1.17}
  \left\{
   \begin{array}{lcr}
   \displaystyle{
    A\, v_k =  f(x),}
    \quad \hfill x \in G_{-k}^\xi,
    \\[1mm]
    \displaystyle{
    B\, v_k = g(x),} \quad \hfill x \in \Sigma_{-k}^{\xi},
    \\[1mm]
    \displaystyle
    v_k( - k, x') = v_k(\xi,x') = 0, \quad \hfill x' \in Q.
   \end{array}
  \right.
\end{equation}
Using the maximum principle for $w_k$ and estimates in Lemma~\ref{PP-Th-v^k-2} for $v_k$, we get
\[
 \|u_k\|_{L^2(G_N^{N+1})} \le \|v_k\|_{L^2(G_N^{N+1})} + \|w_k\|_{L^2(G_N^{N+1})}
 \le C\, M  + |Q|\|u_k\|_{L^\infty(S_\eta)}, \quad N<\xi,
\]
with the constant $M$ defined by \eqref{M}.

Thus,
\begin{align*}
|Q|\min \limits_{\overline G_\eta^{\eta+1}} u_k(x) \le \|u_k\|_{L^2(G_\eta^{\eta+1})} & \le \|v_k\|_{L^2(G_\eta^{\eta+1})} + \|w_k\|_{L^2(G_\eta^{\eta+1})}
\\
&< {|Q|\over2} \min \limits_{\overline G_\eta^{\eta+1}} u_k(x) + C\, M,
\end{align*}
and, consequently
\begin{equation}
\label{1.18}
 u_k(\xi, x') \le \frac{|Q|}{2} \, \min \limits_{\overline G_\eta^{\eta+1}} u_k(x) \leq C\, M.
\end{equation}
Since $\bar{b}^{+}<0$, by Lemma~\ref{PP-Th-v^k-1} and \eqref{1.18} we have
\[
|u_k(x)| \le C_0 \, \|u_k\|_{L^\infty(S_\xi)} \, e^{-\gamma x_1} \le C\, M \, e^{-\gamma x_1}, \quad C_0, \gamma > 0, \,\, x \in G_\xi^k.
\]
In the cylinder $G_{-k}^\xi$ we have
\[
 \|u_k\|_{L^2(G_N^{N+1})} \le C M, \quad N<\xi.
\]
The elliptic estimates give a local estimate for the gradient of $u_k$:
\[
\|\nabla u_k\|_{L^2(G_N^{N+1})} \le C \|u_k\|_{L^2(G_{N-1}^{N+2})} \le C \, M.
\]
Thus, $u_k$, up to a subsequence, converges weakly in $H_{loc}^1(\GG)$ to a solution $u$ of problem \eqref{eq:orig-prob}. This solution, restricted to the semi-infinite cylinders $G_{-\infty}^0$ and $G_0^{+\infty}$ stabilizes exponentially to some constants $K^\mp$, as $x_1 \to \mp \infty$.

\bigskip

It is left to prove that a solution is unique up to an additive constant. Suppose that  there are two solutions $u_1$ and $u_2$ to problem \eqref{eq:orig-prob} such that
\begin{align*}
u_l & \to K^{+}, \quad x_1 \to + \infty, \quad l =1,2;\\
u_l & \to K_l^{-}, \quad x_1 \to -\infty, \quad l =1,2.
\end{align*}
Then $w=u_1-u_2$ solves the homogeneous problem
$$
\left\{
\begin{array}{lcr}
A \, w = 0, \quad \hfill x \in \GG,
\\[1mm]
B \, w = 0, \quad \hfill x \in \Sigma,
\end{array}
\right.
$$
and
$$
w \to K^{-} = K_1^{-} - K_2^{-} \neq 0, \quad x_1 \to -\infty; \qquad
w \to 0, \quad x_1 \to +\infty.
$$
Let us consider the restriction of $w$ on the half-cylinder $G_{-\infty}^k$, $k \gg 1$. Since $w \to 0$ at exponential rate, as $x_1 \to +\infty$, then
$$
w(k, x') \le C \, e^{-\gamma k}, \quad C, \, \gamma > 0.
$$
Taking into account that $\bar{b}^{+} \le 0$, we see that $w$ converges to a uniquely defined constant $C_w$, as $x_1 \to -\infty$
$$
|C_w| \le \|w\|_{L^\infty(S_{k})} \le C \, e^{-\gamma k}.
$$
Obviously, whatever $K^{-}$ is, one can chose $k_0$ such that for any $k > k_0$
$$
K^{-} > C \, e^{-\gamma k}.
$$
We arrive at contradiction. Note that, by the maximum principle, a solution to a homogeneous problem that decays to zero when $x_1 \to \pm \infty$, is necessarily zero.

Notice that estimates \eqref{eq:est-u-case2}, as well as the exponential stabilization of a solution to constants, remain valid for generic functions $f \in L^2(\GG)$ and $g \in L^2(\Sigma)$ satisfying condition $\bf (H3)$. Theorem~\ref{Th.2} is proved.

\end{proof}

%==================================================================================================================
%------------------------------------b_1^{+}<0, b_1^{-} > 0--------------------------------------------------------
%==================================================================================================================

\section{The case $\bar{b}^{+}\ge0$, $\bar{b}^{-}\le0$}
\label{s_4}

In the case $\bar{b}^{+}\ge 0$, $\bar{b}^{-}\le 0$ a bounded solution of problem \eqref{eq:orig-prob} might fail to exist. Like in the Fredholm theorem, the existence of a bounded solution is granted by an orthogonality condition. Namely, problem \eqref{eq:orig-prob} has a bounded solution if and only if the right-hand side in \eqref{eq:orig-prob} is orthogonal to $p(x)\in H_{loc}^1(\GG) \cap C(\overline \GG)$, a unique, up to a multiplicative constant, bounded solution of the adjoint problem
\begin{equation}
\label{eq-p}
\left\{
\begin{array}{lcr}
A^* p(x) = -{\rm div}(a \nabla p) - {\rm div}(b\, p) = 0, \quad \hfill x \in \GG,
\\[2mm]
B^* p(x) = a\nabla p \cdot n + (b\cdot n)\, p = 0, \quad \hfill x \in \Sigma.
\end{array}
\right.
\end{equation}

\noindent

The next statement asserts the existence and describe the qualitative properties of the ground state of the adjoint operator in the infinite cylinder $\GG$. Note that $p$ decays exponentially in a semi-cylinder if  the corresponding effective drift is nonzero and stabilises to a periodic regime in the semi-cylinder where the effective drift is zero.
\begin{theorem}
\label{Th-main-P(x)}
$ $\\
\noindent
(i) Let $\bar{b}^{+}>0$, $\bar{b}^{-}<0$.
There exists a unique, up to a multiplicative constant, positive function $p(x) \in H^1(\GG) \cap C(\overline{\GG})$ solving problem \eqref{eq-p}. Moreover, under the normalization condition
$
\max \limits_{\GG} p(x) \, dx = 1
$
the estimate holds
\begin{equation}
\label{est-p}
p(x) \le Ce^{-\delta |x_1|}, \quad x \in \GG,
\end{equation}
with the constants $\delta>0$ and $C$ depending only on $\Lambda, d, Q$ and $\bar b^\pm$.

$ $\\
\noindent
(ii) Let $\bar{b}^-<0$, $\bar{b}^+=0$.
There exists a unique, up to a multiplicative constant, positive function $p(x) \in H_{loc}^1(\GG) \cap C(\overline{\GG})$ solving problem \eqref{eq-p}. Under the normalization condition
$
\max \limits_{\GG} p(x) \, dx = 1
$
the function $p$ decays exponentially as $x_1 \to -\infty$ and stabilises to a periodic function $p^+$ solving \eqref{eq:p-peridic} when $x_1 \to +\infty$:
\begin{align}
\label{est-p2}
p(x) \le Ce^{\delta x_1}, \quad x \in G_{-\infty}^0, \\
|p - p^+| \to 0, \quad x_1 \to + \infty,
\end{align}
with the constants $\delta>0$ and $C$ depending only on $\Lambda, d, Q$ and $\bar b^\pm$.

$ $\\
\noindent
(iii) Let $\bar{b}^-=\bar{b}^+=0$.
There exists a unique, up to a multiplicative constant, positive function $p(x) \in H_{loc}^1(\GG) \cap C(\overline{\GG})$ solving problem \eqref{eq-p}. Under the normalization condition
$
\max \limits_{\GG} p(x) \, dx = 1
$
the function $p$ stabilises to periodic functions $p^+$ and $p^-$  solving \eqref{eq:p-peridic} when $x_1 \to \pm\infty$ respectively:
\begin{align}
\label{est-p3}
|p - p^\pm| \to 0, \quad x_1 \to \pm \infty.
\end{align}

\end{theorem}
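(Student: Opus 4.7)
The plan is to construct $p$ as a limit of positive solutions to auxiliary problems on finite cylinders, in the spirit of Theorems~\ref{Th.1}--\ref{Th.2}, and to use the semi-infinite cylinder theory of Section~\ref{s_5} to deduce the behavior at infinity. A useful reduction is the substitution $q=p/p^\pm$ in each half-cylinder: using that $p^\pm$ solves the adjoint cell problem \eqref{eq:p-peridic}, a direct computation shows that $q$ satisfies an equation of the form $-\operatorname{div}(a\nabla q)+\tilde b^\pm\cdot\nabla q=0$ with the new convection coefficient $\tilde b^\pm=-2\,a\nabla p^\pm/p^\pm-b^\pm$, whose effective axial drift equals $-\bar b^\pm$. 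This reduces the asymptotic analysis of $A^*$ in each half-cylinder to the semi-infinite cylinder results for operators of the same form as $A$ already collected in Section~\ref{s_5}.

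I describe the argument for part~(i). For each $k\ge 2$, consider the principal eigenvalue problem
\begin{equation*}
A^*\phi_k=\lambda_k\phi_k\text{ in }G_{-k}^k,\quad B^*\phi_k=0\text{ on }\Sigma_{-k}^k,\quad \phi_k(\pm k,x')=0,
\end{equation*}
which, by the Krein--Rutman theorem applied to the compact positivity-preserving resolvent (the strong maximum principle being extended up to the lateral boundary by reflection as in the proof of Theorem~\ref{Th.1}), admits a simple real principal eigenvalue $\lambda_k$ with positive eigenfunction $\phi_k$; normalize by $\|\phi_k\|_{L^\infty(G_{-k}^k)}=1$. The sequence $\lambda_k$ is monotone in $k$ and bounded above by $0$. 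The crucial step in case~(i) is to show $\lambda_k\to 0$; this is established by constructing a bounded positive supersolution $\Psi$ on $\GG$ by gluing $p^-$ and $p^+$ across the transition region $G_{-1}^1$ via a smooth interpolation, modified locally if needed so that $A^*\Psi\ge 0$, which is made possible by the confining sign configuration $\bar b^+>0,\bar b^-<0$ (in the $q$-variable the effective drifts become $-\bar b^+<0$ on the right and $-\bar b^->0$ on the left, both pointing inward). Local H\"{o}lder estimates (\cite{GilTrud}, Theorem~8.24) and compactness then yield, up to a subsequence, $\phi_k\to p$ locally uniformly in $\overline\GG$ and weakly in $H^1_{loc}(\GG)$, with $\|p\|_{L^\infty}=1$, $A^*p=0$ in $\GG$, $B^*p=0$ on $\Sigma$; the Harnack inequality forces $p>0$ in $\overline\GG$.

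The exponential decay of $p$ is read off from the substitution. On $G_1^{+\infty}$ the function $q=p/p^+$ satisfies a convection--diffusion equation of the form of $A$ with effective drift $-\bar b^+<0$, so by the semi-infinite cylinder analysis (Lemma~\ref{PP-Th-v^k-1} and Theorem~\ref{PP-Th-semi-inf}) it stabilizes at exponential rate to a constant $c^+$; comparison with the supersolution $\Psi$ and the concentration of mass of $\phi_k$ in the middle force $c^+=0$, giving $p(x)\le Ce^{-\delta x_1}$ on $G_1^{+\infty}$. An analogous analysis on the left yields \eqref{est-p}, hence $p\in H^1(\GG)\cap C(\overline\GG)$. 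Uniqueness up to a multiplicative constant follows from a standard ratio argument: if $\tilde p$ is another positive solution in the same class and $\alpha:=\inf_{\overline\GG}\tilde p/p$, the infimum is attained at an interior point (both functions are continuous and decay exponentially), so $\tilde p-\alpha p\ge 0$ is a homogeneous solution attaining zero in the interior, hence identically zero by the strong maximum principle. Parts~(ii) and~(iii) follow the same construction, the only change being that when $\bar b^\pm=0$ the semi-infinite cylinder analysis of $q=p/p^\pm$ at the corresponding end produces a positive constant $c^\pm>0$ rather than zero, giving stabilization to $c^\pm p^\pm$ in place of exponential decay. The main technical obstacle throughout is the construction of the global supersolution $\Psi$ (equivalently, the proof that $\lambda_k\to 0$), in which the sign hypotheses on $\bar b^\pm$ play the decisive role.
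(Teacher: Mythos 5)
Your overall strategy (positive solutions of auxiliary problems on finite cylinders, factorization $q=p/p^\pm$ turning $A^*$ into a convection--diffusion operator with effective drift $-\bar b^\pm$, then the semi-infinite cylinder results) is the right one and the factorization computation agrees with what the paper does. However, the proposal has a genuine gap precisely at the step you yourself flag as the main obstacle. You pose the auxiliary problems as Dirichlet principal-eigenvalue problems on the bases $S_{\pm k}$ and need $\lambda_k\to 0$ together with a nondegenerate limit; both of these hinge on the global positive supersolution $\Psi$ obtained by ``gluing $p^-$ and $p^+$ across $G_{-1}^1$, modified locally if needed so that $A^*\Psi\ge 0$.'' This is not a construction: in the transition region the coefficients are arbitrary bounded measurable functions, a smooth interpolation between $p^-$ and $p^+$ gives $A^*\Psi$ of no particular sign there, and no mechanism is offered for the local modification. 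Moreover, even granting $\lambda_k\to 0$, with the normalization $\|\phi_k\|_{L^\infty}=1$ you must rule out that the maximum of $\phi_k$ escapes to $\pm\infty$ (otherwise the local limit is $p\equiv 0$); the ``concentration of mass in the middle'' is asserted, not proved. The paper avoids all of this by imposing the \emph{conormal} (adjoint Neumann) condition $B^*p^k=0$ on $S_{\pm k}$ as well: then $A^*p^k=0$, $B^*p^k=0$ on $\partial G_{-k}^k$ is exactly solvable with a positive solution for every $k$ by Krein--Rutman (the direct Neumann problem has the constants in its kernel), no eigenvalue limit is needed, and the decay on $S_{\pm k}$ is extracted from a flux identity: integrating $A^*p^k=0$ over $G_\xi^k$ gives $\int_{G_\xi^{\xi+1}}(a_{1j}\partial_jp^k+b_1p^k)\,dx=0$, which is incompatible with $q^k\ge\delta>0$ on $S_k$ because the limit of that flux would be $C_k^\infty\,\bar b^+\neq 0$. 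This also pins $p>0$ on $S_0$ via the maximum principle for $q^k$.

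A second, independent flaw is the uniqueness argument. You set $\alpha=\inf_{\overline\GG}\tilde p/p$ and claim the infimum is attained at an interior point ``because both functions decay exponentially.'' That does not follow: the ratio of two exponentially decaying positive functions can tend to its infimum (even to $0$) as $|x_1|\to\infty$, so without a boundary-Harnack-type two-sided bound on $\tilde p/p$ at infinity the strong maximum principle cannot be invoked. The paper instead derives uniqueness of $p$ from the solvability theory of the direct problem (Theorem~\ref{Th.3}): if $p$ and $p_1$ were linearly independent localized elements of the kernel, one could choose data $(f,g)$ orthogonal to $p$ (hence \eqref{eq:orig-prob} is solvable) but not to $p_1$, contradicting the necessity of the orthogonality condition. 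Either repair the ratio argument with a quantitative comparison at infinity, or adopt one of these alternative routes.
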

The proof is presented in Section~\ref{sec:proof-p(x)}.

The main result of this section is  the following theorem.
\begin{theorem}
\label{Th.3}
Let conditions $\bf (H1)-(H3)$ be fulfilled, and suppose that $\bar{b}^{+}\ge0$ and $\bar{b}^{-}\le0$. Then problem \eqref{eq:orig-prob} has a bounded solution if and only if
\begin{equation}
\label{compat_cond}
\int \limits_{\GG} f(x) p(x)\, dx + \int \limits_{\Sigma} g(x) p(x) \, d \sigma = 0.
\end{equation}
where $p(x)$ is defined by Theorem~\ref{Th-main-P(x)}.

Moreover, a solution $u(x)$ to problem \eqref{eq:orig-prob} is unique, up to an additive constant, it stabilizes to some constants at infinity:
\[
\|u - K^{-}\|_{L^2(G_{-\infty}^{-n})} +
\|u - K^{+}\|_{L^2(G_n^{+\infty})} \leq C \, (|K^{+}|+|K^-|) \, e^{- \gamma \, n}, \quad \gamma > 0,
\]
and satisfies the estimates
\begin{align}
\label{est-u-Th.3}
\|u\|_{L^2(G_n^{n+1})} \leq C\,( M + |K^{+}| + |K^{-}|), \quad \|\nabla \, u\|_{L^2(\GG)} \leq C\, M,
\\ \nonumber
M= \|(1 + x_1^2)\,f\|_{L^2(\GG)} +
 \|(1 + x_1^2)\,g\|_{L^2(\Sigma)}.
\end{align}
\end{theorem}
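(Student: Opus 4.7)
The proof will split into (a) necessity of \eqref{compat_cond}, (b) existence under \eqref{compat_cond}, and (c) uniqueness; the estimates in \eqref{est-u-Th.3} and the stabilization then come out of the construction as in Theorems~\ref{Th.1}--\ref{Th.2}.

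\textbf{(a) Necessity.} Given a bounded solution $u$, I would pair $Au=f$, $Bu=g$ with the ground state $p$ from Theorem~\ref{Th-main-P(x)} via the Green identity on the truncated cylinder $G_{-N}^N$. Using symmetry of $a$ together with $A^*p=0$ in $\GG$ and $B^*p=0$ on $\Sigma$, the identity collapses to
\[
\int_{G_{-N}^N} fp\,dx + \int_{\Sigma_{-N}^N} gp\,d\sigma = \Big[\int_{S_N}-\int_{S_{-N}}\Big]\bigl(u(a\nabla p+bp)\cdot e_1 - p(a\nabla u)\cdot e_1\bigr)dx'.
\]
The key observation is that the cross-sectional flux $J(N):=\int_{S_N}(a\nabla p+bp)\cdot e_1\,dx'$ is constant in $N$ (by the divergence theorem plus $B^*p=0$) and in fact equals zero: in case~(i) because $p$ decays exponentially, and in cases~(ii)--(iii) because the slice integral of $(a^\pm\nabla p^\pm+b^\pm p^\pm)\cdot e_1$ identifies with $\bar b^\pm=0$ by the definition~\eqref{bar{b}}. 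Splitting $u=K^\pm+(u-K^\pm)$ on $S_{\pm N}$, the constant part is killed by $J(N)=0$; the fluctuating piece and the $p(a\nabla u)\cdot e_1$ term vanish as $N\to\infty$ by the exponential stabilization of $u$ and $\nabla u$ (Theorem~\ref{PP-Th-semi-inf}) together with the uniform boundedness of $p$, giving \eqref{compat_cond}.

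\textbf{(b) Sufficiency.} Assume \eqref{compat_cond} and approximate by the truncated problems $Au_k=f$, $Bu_k=g$ in $G_{-k}^k$ with $u_k(\pm k,x')=0$, as in the earlier theorems. The decisive step is the uniform estimate $\sup_N\|u_k\|_{L^2(G_N^{N+1})}\le CM$ with $M$ from~\eqref{est-u-Th.3}; once available, weak $H^1_{loc}$-compactness yields a limit $u$, and the stabilization plus the gradient bounds in \eqref{est-u-Th.3} follow by the arguments of Section~\ref{s_3}. I would prove the uniform bound by a rescaling contradiction. Suppose $M_k:=\sup_N\|u_k\|_{L^2(G_N^{N+1})}/M\to\infty$, and set $\tilde u_k:=u_k/(M_kM)$, so that $A\tilde u_k\to0$, $B\tilde u_k\to0$, $\tilde u_k(\pm k,x')=0$ and $\sup_N\|\tilde u_k\|_{L^2(G_N^{N+1})}=1$, attained at some slab $N=N_k$. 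If $\{N_k\}$ stays bounded, extract an $H^1_{loc}$-subsequential limit $\tilde u$, a nontrivial bounded solution of the homogeneous problem on $\GG$; applying the identity from step~(a) to $\tilde u_k$ against $p$ — whose left-hand side tends to $0$ by both the data rescaling and \eqref{compat_cond} — the homogeneous Dirichlet ends kill the $u$-contributions at $S_{\pm k}$, so exponential stabilization of $\tilde u$ and the flux identities force the stabilization constants $\tilde K^\pm$ to vanish, and then the maximum-principle argument from the uniqueness part of Theorem~\ref{Th.2} gives $\tilde u\equiv 0$, contradicting the normalization. If instead $|N_k|\to\infty$, shift by $-N_k$: because the coefficients are periodic outside $G_{-1}^1$, the shifted sequence converges to a nontrivial bounded homogeneous solution on a purely periodic infinite cylinder, which by the Liouville-type uniqueness of~\cite{PaPi} recalled in Section~\ref{s_5} must be constant; matching the decaying behaviour of the approximants at the ends forces that constant to be zero, again a contradiction.

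\textbf{(c) Uniqueness} modulo additive constants follows by applying step~(a) to the difference of two bounded solutions: it is a bounded homogeneous solution with stabilization constants $K^+$, $K^-$, and the flux identity together with the exponential convergence of Theorem~\ref{PP-Th-semi-inf} forces $K^+=K^-$, i.e.\ the difference is constant. The main obstacle throughout is the uniform bound on $u_k$: one must eliminate a potential growing mode aligned with $\ker A^*=\mathrm{span}\{p\}$, and \eqref{compat_cond} is precisely the obstruction that excludes it; the most delicate subcase is $|N_k|\to\infty$, which is handled by reduction to the Liouville-type result for the purely periodic problem from Section~\ref{s_5}.
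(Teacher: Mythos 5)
Your necessity argument (a) is essentially the paper's (the paper disposes of it in one line by multiplying by $p$ and integrating by parts; your computation that the flux $J(N)$ is constant and equal to zero is a correct, more careful version of the same step). The real problem is part (b). The paper devotes two explicit examples (Examples 1 and 2 in Section \ref{s_4}) to showing that Dirichlet truncations $u_k(\pm k,x')=0$ are precisely the wrong approximation in the case $\bar b^{+}\ge 0$, $\bar b^{-}\le 0$: in Example 1 (where $\bar b^{+}>0$, $\bar b^{-}<0$ and \eqref{compat_cond} is violated) the truncated solutions blow up like $e^{\gamma k}$ with a profile that is essentially a large \emph{constant} in the bulk, with $O(1)$ boundary layers at $S_{\pm k}$. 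This is fatal for your rescaling argument: after dividing by $M_k$, the blow-up limit $\tilde u$ is a nonzero constant, which \emph{is} a bounded solution of the homogeneous problem, so arriving at a nontrivial bounded homogeneous limit is not a contradiction. Your attempt to force $\tilde K^{\pm}=0$ via the Green identity against $p$ does not work: in case (i) $p$ and $\nabla p$ decay exponentially at both ends while $\nabla\tilde u_k$ is merely bounded near $S_{\pm k}$, so every boundary term tends to zero and the identity degenerates to $0=0$; and the Dirichlet values at $S_{\pm k}$ do not constrain the limit, which is only local. Worse, the only place \eqref{compat_cond} enters your argument is in asserting that the left-hand side of that identity tends to zero --- but it tends to zero anyway, simply because you divided by $M_k\to\infty$. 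So your sufficiency proof never genuinely uses the hypothesis; since the conclusion is false without it, the argument cannot be repaired by more care in the limits --- it needs a different mechanism. The same objection applies to the subcase $|N_k|\to\infty$: when the ends recede to infinity in the rescaled frame there is nothing to "match", and the Liouville constant need not vanish.

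The paper's mechanism is to change the truncation: impose the conormal (Neumann) condition $B u^k=0$ on $S_{\pm k}$, so that the truncated problem is itself of Fredholm type with cokernel spanned by the finite-cylinder ground state $p^k$, and add a correction $r^k$ supported in $G_{-1}^1$ which makes the finite-cylinder compatibility condition hold \emph{exactly}; condition \eqref{compat_cond} is then used to show $r^k\to 0$. The uniform bounds come from testing with $p^k u^k$ (a weighted energy estimate for $\sqrt{p^k}\,\nabla u^k$), the monotonicity property of $p^k$ in Proposition \ref{propos-p^k-decrease}, and a reduction to Dirichlet--Neumann problems on the half-cylinders (Lemmas \ref{lemma-D-N-homogen} and \ref{lemma-D-N-nonhomogen}). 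Finally, your uniqueness step (c) is vacuous for the same reason as above: all boundary terms in the flux identity against $p$ vanish in the limit, so it cannot yield $K^{+}=K^{-}$. The statement is true, but the paper proves it by the maximum-principle/stabilization argument of Theorem \ref{Th.2} (compare the limit constant at one end with the exponentially small trace on a far slice at the other end), not by duality against $p$.
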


The proof is presented in Section~\ref{sec:case-3}.

\subsection{Proof of Theorem~\ref{Th-main-P(x)}.}
\label{sec:proof-p(x)}

In order to prove the existence of a solution to problem \eqref{eq-p}, we consider the following auxiliary problems defined in  growing cylinders:
\begin{equation}
\label{eq-p_k}
\left\{
\begin{array}{lcl}
&A^* \, p^k = 0, \quad \hfill &x \in G_{-k}^k,
\\[1mm]
&B^* \, p^k = 0, \quad \hfill &x \in \partial G_{-k}^k,
\end{array}
\right.
\end{equation}
where $B^*p^k=a\nabla p^k\cdot n+b\cdot np^k$.
Examples~1 and 2 presented in the end of Section~\ref{s_4} give a motivation for the choice of the 
adjoint
Neumann boundary conditions for $p^k$ on the bases $S_{\pm k} = \{\pm k\} \times Q$.
By the Krein-Rutman theorem (see, for example, \cite{KLS}), problems \eqref{eq-p_k} are solvable, and $p^k(x)$ are positive continuous functions in $\overline{G_{-k}^k}$. The solution $p^k$ is unique up to a multiplicative constant. We normalize $p^k$ in such a way that
\begin{align}
\label{eq:norm-cond-p^k}
\max \limits_{G_{-k}^k} p^k = 1.
\end{align}
%Without loss of generality we assume that the maximum is attained in the right half-cylinder $G_1^k$.
Due to \eqref{eq:norm-cond-p^k} and the elliptic estimates, $p^k$ is uniformly
in $k$ bounded in $H^1(G_N^{N+1})$  for any $N$ and, thus, the sequence $\{p^k\}$ converges weakly in $H_{loc}^1(\GG)$ to a solution $p$ of \eqref{eq-p}. Our goal is to show that $p$ is positive, that it tends exponentially to zero at infinity in the half-cylinder where the corresponding effective drift is nonzero, and stabilises to a periodic function in the half-cylinder where the effective drift is equal to zero.
%, with growing $x_1$.

\bigskip
(i) Let $\bar b^+ >0$ and $\bar b^- <0$. We will derive upper and local lower bounds for $p^k(x)$ in the right part of the cylinder, $G_1^k$: The left part $G_{-k}^{-1}$ for $\bar b^- <0$ is considered in the same way.

First we show that $p^k(1,x')$ is bounded from below by a positive constant. To this end we factorize $p^k$ with $p^{+}$, a solution to the periodic problem \eqref{eq:p-peridic}, in $G_1^k$:
\[
p^k(x) = p^{+}(x) \, q^k(x),
\]
then $q^k$ solves the problem
\begin{align}
\label{eq:prob-q^k-1}
\left\{
\begin{array}{lcr}
-{\rm div}(a^+ (p^{+})^2 \nabla q^k) + b^+ (p^{+})^2\cdot \nabla q^k = 0, \quad \hfill x \in G_1^k,
\\[1mm]
a^+ (p^{+})^2 \nabla q^k \cdot n = 0, \quad \hfill x \in \Sigma_1^k,
\\[1mm]
\displaystyle
q^k = \frac{p^k}{p^+}, \quad \hfill x \in S_1 \cup S_k.
\end{array}
\right.
\end{align}
Note that, since $p^+>0$, the function $q^k$ is well defined and positive everywhere in $G_1^k$. For \eqref{eq:prob-q^k-1} the maximum principle is valid, and $q^k$ attains its maximum on the bases $S_1\cup S_k$. 

Since $\min p^+ \le p^+ \le \max p^+$, we have
\[
\max_{\overline S_1\cup \overline S_k} q^k=\max_{\overline G_{1}^k} q^k \ge \frac{1}{\max p^+} >0 \quad \Rightarrow \quad
\max_{\overline S_1\cup \overline S_k} p^k \ge \frac{\min p^+}{\max p^+} > 0.
\]
Let us show that $p^k=o(1)$, $k \to \infty$, on $S_k$, or equivalently let us show that we cannot have $q^k \ge \delta >0$ on $S_k$ for large $k$.

Assume that, for a subsequence,  $q^k \ge \delta > 0$ on $S_k$. For  notation simplicity we do not relabel this subsequence. Since $(p^+)^{-1}$ belongs to the kernel of the periodic adjoint operator associated with \eqref{eq:prob-q^k-1}, the effective drift for \eqref{eq:prob-q^k-1} is
\[
\int_Y (a_{1j}^+ (p^+)^2 \partial_j (p^+)^{-1} + b_1^+ (p^+)^2 (p^+)^{-1}) dx = - \bar b^+ <0.
\]
Since $0 < \delta \le q^k \le 1/\min_{\bar Y} p^+$ on $S_k$, by Lemma~\ref{PP-Th-v^k-1} and the comparison principle, $q^k$ is exponentially close to some constant $C_k^\infty$ in  the interior part of $G_1^k$:
\[
|q^k - C_k^\infty| \le C\, \big(e^{-\gamma x_1}+e^{-\gamma(k-x_1)}\big), \quad  \,\, x \in G_1^k,
\]
and
\[
\|\nabla q^k\|_{L^2(G_N^{N+1})} \le C\, \big(e^{-\gamma N}+e^{-\gamma (k-N)}\big), \quad N>1,
\]
where $0<\delta \le C^\infty_k\le 1/\min_{\bar Y} p^+$, and $\gamma>0$ does not depend on $k$.
Consequently, $p^k$ is exponentially close to $C_k^\infty p^+$:
\begin{align}
\label{eq:1}
|p^k - C_k^\infty p^+| & \le C\, \big(e^{-\gamma x_1}+e^{-\gamma(k-x_1)}\big), \quad \gamma>0, \,\, x \in G_1^k,\notag
\\
\|\nabla p^k - C_k^\infty \nabla p^+\|_{L^2(G_N^{N+1})} &\le C\, \big(e^{-\gamma N}+e^{-\gamma (k-N)}\big), \quad N > 1.
\end{align}
Integrating \eqref{eq-p_k} over $G_\xi^k$, $\xi \ge 1$, we get
\[
\int \limits_{S_\xi} (a_{1j}\partial_j p^k + b_1\, p^k) dx' =
\int \limits_{S_k} (a_{1j}\partial_j p^k + b_1\, p^k ) dx' = 0.
\]
Thus
\[
\int \limits_{G_\xi^{\xi+1}} (a_{1j}\partial_j p^k + b_1 \, p^k ) dx = 0
\]
for any $\xi\in[1,k-1]$.
Using \eqref{eq:1} and passing to the limit in the last equality, we obtain $\bar b^+ = 0$,
which contradicts our assumption. Consequently, $q^k\ge C_1 >0$ on $S_1$ and $q^k$ tends to zero on $S_k$, as $k \to +\infty$, with $C_1$ independent of $k$. In view of the bounds for $p^+$, the same holds for $p^k$:
\begin{align}
\label{eq:est-p^k-1}
p^k\ge C_1 >0 \,\, \mbox{on} \,\,S_1; \quad p^k= o(1)\,\, \mbox{on} \,\, S_k, \quad k \to +\infty.
\end{align}
Therefore,
\begin{align}
\label{eq:est-p^k-2}
p^k  \le C \,  \big(e^{-\gamma x_1}+o(1)\big), \quad k\to \infty, \quad x \in G_1^k.
\end{align}
Since $p^k$ is bounded uniformly in $C^\alpha(\overline{G_1^{k}})$, one can pass to the limit in \eqref{eq:est-p^k-1}--\eqref{eq:est-p^k-2}, as $k\to\infty$, on any compact set in $G_1^\infty$  and  obtain the following estimate for $p$ solving \eqref{eq-p}:
\begin{align}
\label{eq:est-p-1}
0\le p \le C\, e^{-\gamma x_1}, \quad x \in G_1^{+\infty}; \quad p \ge C_1 > 0 \,\, \mbox{on} \,\, S_0.
\end{align}
By the elliptic estimates,
\[
\|p\|_{L^2(G_N^{N+1})} + \|\nabla p\|_{L^2(G_N^{N+1})} \le C\, e^{-\gamma N}, \quad N \ge 1.
\]
Summing up in $N$, we obtain a global $H^1(G_1^{+\infty})$ bound for $p$.

%\comment{We did not make the assumptions that the effective drift in this part is not equal to zero}
If $\bar b^-<0$ then in the same way we get a uniform $H^1(G_{-\infty}^{-1})$ bound for $p$ and
\[
p \le C\, e^{\gamma x_1}, \quad x \in G_{-\infty}^{-1}.
\]
By the normalization condition \eqref{eq:norm-cond-p^k}, the estimate in \eqref{est-p} holds.

The lower bound in \eqref{eq:est-p-1} on $S_0$ and the Harnack inequality implies that $p$ is positive everywhere in $\mathbb G$.

The uniqueness of $p$, up to a multiplicative constant, follows from Theorem~\ref{Th.3}. Indeed, assume there exist two localized functions $p, p_1$ solving \eqref{eq-p}. Both functions satisfy estimate \eqref{est-p}. We can find a pair $(f,g)$ such that the compatibility condition \eqref{compat_cond} is satisfied with $p$ and \eqref{eq:orig-prob} is solvable. But \eqref{compat_cond} is also necessary, so $(f,g)$ should be orthogonal to both functions, which implies that $p$ and $p_1$ are linearly dependent.

\bigskip
(ii) Now we assume that $\bar b^- <0$ and $\bar b^+ = 0$. The exponential decay of $p$ in $G_{-\infty}^0$ follows from (i). The proof of the stabilization to a periodic regime in the right half-cylinder follows the lines of the proof of Lemma 11  in \cite{AlPi-2016}. The idea of the proof is as follows.

At the first step, as before, we factorize a solution $p^k$ of \eqref{eq-p_k}  by $p^+$ and  show that for the solution $q^k = p^k/p^+$ of \eqref{eq:prob-q^k-1} the following estimates hold (see Lemma 12  in \cite{AlPi-2016}):
\begin{align*}
\max_{S_1} q^k \ge \min_{S_k}q^k, \quad
\min_{S_1} q^k \le \max_{S_k}q^k.
\end{align*}
At the second step by the Harnack inequality we get
\begin{align*}
0 < C_0 \le \min_{G_1^k} q^k \le \max_{G_1^k}q^k \le C_0^{-1}.
\end{align*}
Then it follows from Lemma \ref{PP-Th-v^k-1} that $q^k$ is close to a linear function in $G_1^k$, $q^k$ converges as $k \to \infty$ to $q\neq 0$, a solution to the corresponding problem in the semi-infinite cylinder which stabilizes exponentially to a constant when $x_1 \to \infty$. In terms of $p^k$ this means that $p^k$ satisfies the estimate
\begin{align*}
|p^k - p^+| \le C e^{-\gamma x_1},
\end{align*}
 for some $\gamma$ and $C$.  passing to the limit as $k\to \infty$ yields the desired estimate for $p$.

 \bigskip
(iii) The proof follows the lines of that of Lemma 11 in \cite{AlPi-2016}.

Theorem~\ref{Th-main-P(x)} is proved.

%%%%%%%%%%%%%%%%%%%%%%%%%%%%%%%%%%%%%%%%%%%%%%%%%%%%%%%

\begin{remark}
\label{remark:est-p^k}
We can improve the estimate \eqref{eq:est-p^k-1}, in the case when the corresponding effective drift is non-trivial, and show that
 $p^k \le e^{-\gamma_1 k}$ on $S_{\pm k}$ for some $\gamma_1>0$.

%Indeed, let us assume that for any $\gamma_1$
%\[
%\lim_{k \to \infty} \frac{e^{-\gamma_1 k}}{p^k(k,x')}=0.
%\]
Decomposing $p^k$ into a sum $r^k + s^k$, where
\[
\left\{
\begin{array}{l}
A^* r^k = 0 \quad \mbox{in} \,\, G_1^k,\\
B^* r^k = 0 \quad \mbox{on} \,\, \Sigma_1^k,\\
r^k\big|_{S_1}=p^k\big|_{S_1} >\delta > 0, \quad r^k\big|_{S_k}=0;
\end{array}
\right.
\qquad
\left\{
\begin{array}{l}
A^* s^k = 0 \quad \mbox{in} \,\, G_1^k,\\
B^* s^k = 0 \quad \mbox{on} \,\, \partial G_1^k\\
s^k\big|_{S_1}=0, \quad s^k\big|_{S_k}=p^k\big|_{S_k}.
\end{array}
\right.
\]
Factorizing $r^k$ and $s^k$ by $p^+$ and repeating the argument in the proof above, we show that for some $\gamma >0$ and a constant $K_k^\infty$
\[
|r^k| \le C\, e^{\gamma x_1}, \quad
|s^k - K_k^\infty\, p^+| \le C \,  e^{-\gamma k}.
\]
%The constant $K_k^\infty$ is such that $\lim_{k\to \infty}  e^{-\gamma_1 k}/K_k^\infty = 0$.
We compute the flux through $S_{k/2}$:
\begin{align}
 \nonumber
0 &=\int \limits_{S_{k/2}} (a_{1j}\partial_j p^k + b_1\, p^k ) dx' \\
& = \int \limits_{S_{k/2}} (a_{1j}\partial_j r^k + b_1\, r^k ) dx'
+ \int \limits_{S_{k/2}} (a_{1j}\partial_j s^k + b_1\, s^k ) dx' \\ \nonumber
& =O( e^{-\gamma k}) + K_k^\infty \int \limits_{S_{k/2}} (a_{1j}\partial_j p^+ + b_1\, p^+ ) dx'.
\end{align}
Since $\int \limits_{S_{k/2}} (a_{1j}\partial_j p^+ + b_1\, p^+ ) dx' = \bar b^+ /|Q| >0$, we have $K_k^\infty = O(e^{-\gamma_1 k})$, $k \to \infty$. Thus, $p^k$ on $S_k$ is exponentially small. Similar argument gives that $p^k$ is exponentially small on $S_{-k}$. Consequently, there exist $\gamma_0 >0$ such that
\begin{align}
\label{eq:est-p^k-exponential}
0< p^k  \le C \, e^{-\gamma_0 |x_1|}, \quad x \in G_{-k}^k.
\end{align}
\end{remark}

\bigskip

\subsection{Proof of Theorem~\ref{Th.3}.}
\label{sec:case-3}
We turn to the original problem \eqref{eq:orig-prob}.

To prove the existence of a solution we will consider a sequence of auxiliary problems in a growing family of finite cylinders and pass to the limit, as the length of the cylinder goes to infinity. It should be noted that a sequence of auxiliary problems with Dirichlet boundary conditions will not give us any reasonable approximation. This is illustrated in the two examples below.

%%%%%%%%%%%%%%%%%%%%%%%%%%%%%%%%%%%%%%%%%%%%%%%%%%%%%%%%%%%
\textbf{Example 1.}
\textit{
Consider a family of problems
\begin{equation}
\label{1.22}
\left\{
\begin{array}{lcr}
{v_k}'' + b(x) \, v_k' = f(x), \quad x \in (-k,k),
\\[1.5mm]
v_k(-k)=v_k(k)=0,
\end{array}
\right.
\end{equation}
where $b(x)= - {\rm sign}(x)$, $x \in [-k,k]$, and the function $f(x) = \chi_{[-1,1]}$ is a characteristic function of the interval $[-1,1]$. This equation admits an explicit solution:
$$
v_k(x) = \left\{
\begin{array}{lcr}
(e-1)\, (e^{k} - e^{x})/e, \quad x \in (1,k),
\\[1.5mm]
2(1-e) - x + (e-1) e^{k-1} + e^{|x|}, \quad x \in (-1,1),
\\[1.5mm]
(e-1)\, (e^{k} - e^{-x})/e, \quad x \in (-k,-1).
\end{array}
\right.
$$
Although \eqref{1.22} has a unique solution without any conditions on the right-hand side, this solution can approximate, as $k \to \infty$, no bounded function on $\mathbb{R}^1$ because $v_k \to \infty$ at the exponential rate, as $k \to \infty$.}
\\
%\begin{figure}[h!][htbp]
%\centering
%\includegraphics[scale=0.7, bb=0 0 0 0]{v^k.pdf}
%\caption{$v^k(x)$ satisfying different types of boundary conditions}
%\end{figure}
\textbf{Example 2.}
\textit{
Let us examine a one-dimensional problem with Neumann boundary conditions
\begin{equation}
\label{1.23}
\left\{
\begin{array}{lcr}
{v_k}'' + b(x) \, {v_k}' = f(x), \quad x \in (-k,k),
\\[1.5mm]
v_k'(-k)=v_k'(k)=0,
\end{array}
\right.
\end{equation}
where $b(x) = - {\rm sign} (x)$, $x \in (-k, k)$. We are going to choose the function $f(x)$ in such a way that the compatibility condition for \eqref{1.23} is satisfied. The kernel of the formally adjoint operator is one-dimensional and consists of functions $\{\lambda \,q_k(x)\}$, $\lambda \in \mathbb{R}$, where $q_k$ solves
\begin{equation}
\label{example_p^k}
\left\{
\begin{array}{lcr}
q_k'' - (b(x)\, q_k)' = 0, \quad x \in (-k,k),
\\[1.5mm]
(q_k' - b \, q_k)(\pm k)=0.
\end{array}
\right.
\end{equation}
We normalize $q_k$ by $\int_{G_{-k}^k} q_k(x) \, dx = 1$. Integrating \eqref{example_p^k} we get
$$
q_k(x) = \frac{1}{2} \, (1 - e^{-k})^{-1} \, e^{-|x|}.
$$
The sequence $\{q_k(x)\}$ converges uniformly to $q(x)= e^{- |x|}/2$, as $k \to \infty$, $x \in \mathbb{R}$.
\\
%\begin{figure}[h!] %[htbp]
%\centering
%\includegraphics[scale=0.2, bb=50 50 60 60]{q(x).pdf}
%\includegraphics[scale=0.3, %height=7in,width=5in,
%bb=0 0 30 30]{q(x).pdf}
%\end{figure}
\\
To satisfy the compatibility condition $\int_{-k}^k q_k\, f\, dx = 0$ we take $f(x)$ such that $f(x) = {\rm sign}(x)$ if $x \in (-1,1)$ and $f(x) = 0$ otherwise.  Using the continuity conditions for the solution and the flux density at the points $x =0$ and $x = \pm 1$, and taking into account that the solution is defined up to an additive constant, we obtain
$$
v_k(x) = \left\{
\begin{array}{lcr}
2 e^{-1}, \quad \hfill x \in (-k, -1),
\\[1mm]
-x + e^{-1}(2 - e^{-x}), \quad \hfill x \in (-1,0),
\\[1mm]
-x + e^{-1} \, e^{x}, \quad \hfill x \in (0,1),
\\[1mm]
0, \quad \hfill  x \in (1,k).
\end{array}
\right.
$$
Obviously, $v_k(x)$ converges uniformly to $v(x)$, as $k \to \infty$, where
$$
v(x) = \left\{
\begin{array}{lcr}
2 e^{-1}, \quad \hfill x \in (-\infty, -1),
\\[1mm]
-x + e^{-1}(2 - e^{-x}), \quad \hfill x \in (-1,0),
\\[1mm]
-x + e^{-1} \, e^{x}, \quad \hfill x\in (0,1),
\\[1mm]
0, \quad \hfill x \in (1,\infty).
\end{array}
\right.
$$
is a solution of the following equation in $\mathbb{R}$:
$$
v'' + b(x) v' = f(x), \quad x \in \mathbb{R}.
$$}
These two examples suggest an idea of using Neumann boundary conditions instead of Dirichlet on the bases $S_{-k}, S_k$ in the auxiliary problems.
%%%%%%%%%%%%%%%%%%%%%%%%%%%%%%%%%%%%%%%%%%%%%%%%%%%%%%%%%%%%%%%%%%%%%%%%%%%%%%%%%%
\bigskip

%%%%%%%%%%%%%%%%%%%%%%%%%%%%%%%%%%%%%%%%%%%%%%%%%%%%%%%%%%%%%%%%%%%%%%%%%%%%%%%%%%%%%%%%%%%%%

We proceed with the proof of Theorem~\ref{Th.3}.

The fact that condition \eqref{compat_cond} is necessary for solvability of \eqref{eq:orig-prob} is evident. Indeed, assume that a solution $u(x)$ to problem \eqref{eq:orig-prob} exists. Multiplying equation \eqref{eq:orig-prob} by $p(x)$ defined in Theorem~\ref{Th-main-P(x)} and integrating by parts yields \eqref{compat_cond}.

Let us now prove that \eqref{compat_cond} is sufficient. We consider the following problems in the growing cylinders $G_{-k}^k$:
\begin{equation}
\label{prob_fin_cyl}
\left\{
\begin{array}{lcr}
A \, u^k = f + r^k, \quad \hfill x \in G_{-k}^k,
\\[1mm]
B \, u^k=g, \quad \hfill x \in \Sigma_{-k}^k,
\\[1mm]
B \, u^k = 0, \quad \hfill x \in S_{-k}\cup S_k.
\end{array}
\right.
\end{equation}
The function $r_k$ is introduced to ensure that the compatibility condition is satisfied. It is defined as follows:
\begin{equation}\label{r_k}
\begin{array}{c}
\displaystyle
r^k = - \frac{\int_{\GG} f(x)\chi(G_{-k}^k) p^k(x) \, dx + \int_{\Sigma} g(x) \chi(G_{-k}^k) p^k(x)  \,d\sigma}{\int_{G_{-1}^1}p^k(x) \, dx} \,\, \chi(G_{-1}^1)\\[6mm]
\displaystyle
= - \frac{\int_{\GG} f(x) \big(\chi(G_{-k}^k) p^k(x) - p(x)\big)\, dx + \int_{\Sigma} g(x)
\big(\chi(G_{-k}^k) p^k(x) - p(x)\big)\,d\sigma}{\int_{G_{-1}^1}p^k(x) \, dx} \,\, \chi(G_{-1}^1)
,
\end{array}
\end{equation}
where $\chi(G_{-1}^1)$ is the characteristic function of $G_{-1}^1$.
One can see that $r^k \to 0$, as $k \to \infty$, and
$$
\int \limits_{G_{-k}^k} (f + r^k) p^k \,dx + \int \limits_{\Sigma} g(x) p^k \,d\sigma = 0.
$$
In order to obtain a priori estimates for $u^k$, we proceed as follows:
\begin{itemize}
\item
Estimate $\|u^k\|_{H^1(G_{-1}^1)}$.
\item
Reduce the problem in $G_{-k}^k$ to a problem in $G_{0}^k$ with a Dirichlet boundary condition on $S_0$ and homogeneous Neumann boundary condition on $S_k$.
\item
Obtain a priori estimates for a solution of the last problem in $G_0^k$ (Lemmata~\ref{lemma-D-N-homogen} and \ref{lemma-D-N-nonhomogen}).
\end{itemize}

We will present a detailed proof only for the case when $\bar b^-< 0, \bar b^+>0$ which is more technical compared with the case when the effective drift is zero in one (or both) of the half-cylinders. Namely, in the latter case, if $\bar b^+=0$, one can estimate the $L^2(G_0^k)$ norm of  $\nabla u^k$ directly, without using $\sqrt{p^k}$ as a weight, and as a consequence, we do not need Proposition  \ref{propos-p^k-decrease}.

Let us normalize a solution to \eqref{prob_fin_cyl}  by
\begin{equation}\label{zzzzz}
\int \limits_{G_0^1} u^k(x) \, dx = 0.
\end{equation}
We multiply the equation in \eqref{prob_fin_cyl} by $p^k \, u^k$ and integrate by parts over $G_{-k}^k$
\begin{equation}
\label{1.37}
\int \limits_{G_{-k}^k} p^k \, (a \, \nabla u^k, \nabla u^k) \, dx =
\int \limits_{G_{-k}^k} (f + r^k) \, p^k \, u^k \, dx +
\int \limits_{\Sigma_{-k}^k} g \, p^k \, u^k \, d\sigma.
\end{equation}
Since $p^k$ decay exponentially to zero when $\bar b^-< 0, \bar b^+>0$, we cannot get an estimate for $\nabla u^k$ in the whole $G_{-k}^k$ at once. We can, however, obtain a bound for $\nabla u^k$ is a finite cylinder, where $p^k$ is bounded from below. That is why we keep $p^k$ on the left-hand side of \eqref{1.37} and estimate the norm of $\sqrt{p^k} \nabla u^k$.

Using the mean value theorem and the Schwartz inequality, we obtain
$$
\begin{array}{l}
\displaystyle
\Big( \int \limits_{G_{-k}^k} f \, p^k \, u^k \, dx \Big)^2 \le
\Big( \sum \limits_{n=-k}^{k-1} \int \limits_{G_{n}^{n+1}} \big|f \, p^k \, u^k\big| \, dx \Big)^2
\\[2mm]
\displaystyle
= \Big( \sum \limits_{n=-k}^{k-1} p^k(\tilde{x}_n) \int \limits_{G_{n}^{n+1}} \big|f \, u^k\big| \, dx \Big)^2 \le
\Big( \sum \limits_{n=-k}^{k-1} p^k(\tilde{x}_n) \|f\|_{L^2(G_{n}^{n+1})} \,  \|u^k\|_{L^2(G_{n}^{n+1})}\Big)^2,
\end{array}
$$
where $\tilde{x}_n \in G_{n}^{n+1}$.

Denote by $x_n$ the points in $G_n^{n+1}$ such that
$$
1  = \sum \limits_{n=-k}^k |Q| \, p^k(x_n)= \int \limits_{G_{-k}^k} p^k(x) \, dx.
$$
Then, by the Harnack inequality,
$$
p^k(\tilde{x}_n) \le \alpha \, p^k(x_n),
$$
with the constant $\alpha$ depending only on $\Lambda, d$ and $Q$. Due to the convexity property of the quadratic function, we have
$$
\Big( \int \limits_{G_{-k}^k} f \, p^k \, u^k \, dx \Big)^2 \le
\alpha \, \sum \limits_{n=-k}^{k-1} p^k(x_n) \|f\|_{L^2(G_{n}^{n+1})}^2 \,  \|u^k\|_{L^2(G_{n}^{n+1})}^2.
$$
By the Poincar\'{e} inequality, recalling \eqref{zzzzz}, we get
$$
\|u^k\|_{L^2(G_0^1)} \le C \, \|\nabla u^k\|_{L^2(G_0^1)}.
$$
Using the last bound, one can see that
$$
\|u^k\|_{L^2(G_n^{n+1})}^2 \le C \,(1 +|n|) \|\nabla u^k\|_{L^2(G_0^{n+1})}^2, \quad \forall n>0.
$$
with the constant $C$ independent of $n$. Thus,
$$
\begin{array}{l}
\displaystyle
\Big( \int \limits_{G_{-k}^k} f \, p^k \, u^k \, dx \Big)^2 \le
C \, \sum \limits_{n=-k}^{k-1} p^k(x_n) \|(1 + \sqrt{|x_1|}) f\|_{L^2(G_{n}^{n+1})}^2 \,  \|\nabla u^k\|_{L^2(G_{0}^{n+1})}^2
\\[2mm]
\displaystyle
= C \, \left\{ \sum \limits_{n=0}^{k-1} + \sum \limits_{n=-k}^{-1}\right\}p^k(x_n) \|(1 + \sqrt{|x_1|}) f\|_{L^2(G_{n}^{n+1})}^2 \,  \|\nabla u^k\|_{L^2(G_{0}^{n+1})}^2 \equiv J_1^k + J_2^k.
\end{array}
$$
Let us estimate $J_1^k$: $J_2^k$ is considered in the same way. Obviously,
$$
J_1^k = C \, \sum \limits_{l=0}^{k-1} \sum \limits_{n=l}^{k-1} p^k(x_n) \|(1 + \sqrt{|x_1|}) f\|_{L^2(G_{n}^{n+1})}^2 \,  \|\nabla u^k\|_{L^2(G_{l}^{l+1})}^2.
$$
We would like to move $p^k(x_n)$ under the norm $\|\nabla u^k\|$, and to do this we need to know that the values of $p^k$ do not differ too much. The following statement gives a kind of monotonicity of $p^k$.
\begin{proposition}
\label{propos-p^k-decrease}
For all $y,z\in G_0^k$ such that $|z_1| > |y_1|$, a solution $p^k$ of problem \eqref{prob_fin_cyl} satisfies the estimate
\begin{equation}
\label{p^k-decrease}
p^k(z_1,z') \le \beta \, p^k(y_1, y'),
\end{equation}
with the constant $\beta > 0$ depending only on $\Lambda, d$ and $Q$.
\end{proposition}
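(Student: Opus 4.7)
The plan is to deduce the estimate from the factorization $p^k = p^+ q^k$ on $G_1^k$ (respectively $p^k = p^- q^k$ on $G_{-k}^{-1}$), which converts the adjoint equation \eqref{eq-p_k} into the equation \eqref{eq:prob-q^k-1} for $q^k$ without zero-order term and with a conormal Neumann condition on the lateral boundary. This allows the classical maximum principle to be applied to $q^k$ on any sub-cylinder and extended up to the lateral boundary by the usual reflection argument. By symmetry, I would treat the case $0 \le y_1 \le z_1 \le k$; the case $y_1, z_1 \le 0$ is identical using $p^-$, and the mixed configuration $y_1 \le 0 \le z_1$ with $|z_1|>|y_1|$ reduces to the one-sided estimates by inserting an intermediate reference point in the fixed central slab $G_{-1}^1$ and invoking the interior Harnack inequality for $p^k$ there.

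If $z_1 \le 2$, the inequality would follow from a single application of the Harnack inequality to $p^k$ on the fixed compact slab $G_{-1}^3$, with a constant depending only on $\Lambda, d, Q$. For $z_1 > 2$ and $y_1 \ge 1$, I would apply the maximum principle to $q^k$ on the subcylinder $G_{y_1}^k \subset G_1^k$, which gives
\[
q^k(z) \;\le\; \max_{G_{y_1}^k} q^k \;=\; \max\bigl(\max_{S_{y_1}} q^k,\,\max_{S_k} q^k\bigr).
\]
The first term is controlled by Harnack on a unit-thick slab around $S_{y_1}$ combined with the two-sided bound on $p^+$, giving $\max_{S_{y_1}} q^k \le C_H\, q^k(y)$. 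The second term is exponentially small in $k$ by Remark~\ref{remark:est-p^k}: $\max_{S_k} q^k \le C e^{-\gamma_1 k}/\min p^+$.

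The hard part will be to dominate this latter tail by $q^k(y)$ uniformly, since $q^k(y)$ itself may be exponentially small when $y_1$ is large. The approach I have in mind is to couple the exponential envelope of Remark~\ref{remark:est-p^k} with a matching uniform exponential lower bound $p^k(x) \ge c_0\, e^{-\gamma|x_1|}$: the starting estimate $\min_{S_1} p^k \ge c>0$ is already available from the proof of Theorem~\ref{Th-main-P(x)}(i), and I would propagate it across adjacent unit slabs by the interior Harnack inequality for the positive adjoint solution $p^k$, obtaining an exponential lower envelope with rate $\log C_H$. Coupled with the upper envelope from Remark~\ref{remark:est-p^k}, this pair of bounds would yield $p^k(z)/p^k(y) \le C/c$ whenever $|z_1|>|y_1|$, producing the required universal constant $\beta$ depending only on $\Lambda, d, Q$.
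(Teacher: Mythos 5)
Your opening moves coincide with the paper's: the factorization $p^k=p^+q^k$ on $G_0^k$, the maximum principle for the resulting zero-order-free equation \eqref{eq:prob-q^k-1} on the subcylinder $G_{y_1}^k$, and the cross-sectional Harnack bound $\max_{S_{y_1}}q^k\le C_H\,q^k(y)$ are all there. The genuine gap is precisely the step you yourself flag as "the hard part'': dominating the tail term $\max_{S_k}q^k$ by $q^k(y)$. Your plan is to pair the upper envelope $p^k\le Ce^{-\gamma_1|x_1|}$ of Remark~\ref{remark:est-p^k} with a lower envelope obtained by chaining the Harnack inequality slab by slab from $S_1$, which, as you note, decays at rate $\log C_H$. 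These two rates have no reason to match: $\gamma_1$ is dictated by the periodic adjoint operator and can be arbitrarily small (e.g.\ when $\bar b^+$ is close to $0$), while $\log C_H$ depends only on $\Lambda,d,Q$ and is in general much larger. With mismatched rates the two envelopes give only $p^k(z)/p^k(y)\le (C/c)\,e^{-\gamma_1 z_1+(\log C_H)y_1}$, which is unbounded as $y_1\to\infty$ with, say, $z_1=y_1+1$; so the asserted conclusion $p^k(z)/p^k(y)\le C/c$ does not follow. A two-sided exponential sandwich can only yield a ratio bound if the lower rate does not exceed the upper rate, and nothing in your argument secures that.

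The paper closes this step without any quantitative lower envelope, by a qualitative minimum-principle argument for $\zeta^k=p^k/p^+$. Writing $M^k(x_1)=\max_{x'}\zeta^k(x_1,x')$ and $m^k(x_1)=\min_{x'}\zeta^k(x_1,x')$, the absence of a zero-order term gives $m^k(k)\le m^k(x_1)\le M^k(x_1)\le M^k(0)$ and shows that $M^k$ has no interior local maximum on $(0,k)$. Then for $z_1\ge y_1$ one distinguishes two cases. If $M^k(y_1)>M^k(k)$, then $M^k(z_1)\le M^k(y_1)\le \alpha^{-1}m^k(y_1)\le\alpha^{-1}\zeta^k(y)$ by cross-sectional Harnack at $y_1$. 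If $M^k(y_1)\le M^k(k)$, then $\zeta^k(y)\ge m^k(y_1)\ge m^k(k)\ge \alpha M^k(k)\ge\alpha M^k(z_1)$, where the crucial inequality $m^k(x_1)\ge m^k(k)$ holds because the global minimum of $\zeta^k$ must sit on $S_0\cup S_k$ and the Dirichlet datum on $S_k$ is $o(1)$ while on $S_0$ it is bounded below by a fixed positive constant. This is exactly the mechanism your proposal is missing: it controls the $S_k$ tail by $q^k(y)$ directly, with a constant depending only on $\Lambda,d,Q$, and never requires the decay rates of upper and lower bounds to agree. (A minor additional remark: since the proposition takes $y,z\in G_0^k$, the mixed configuration $y_1\le 0\le z_1$ you describe does not actually occur; the left half-cylinder is treated by the symmetric argument with $p^-$.)
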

\begin{proof}
Representing $p^k(x)$ in $G_0^k$ as a product $P^k = p^{+}(x) \, \zeta^k(x)$, where $p^{+}$ is a solution of \eqref{eq:p-peridic}, we obtain the following equation for $\zeta^k$:
$$
\left\{
\begin{array}{lcr}
- {\rm div}(a (p^{+})^2 \nabla \zeta^k) + b(p^{+})^2\cdot \nabla \zeta^k= 0, \quad \hfill x \in G_0^k,
\\[2mm]
a (p^{+})^2 \nabla \zeta^k\cdot n = 0, \quad \hfill x \in \Sigma_0^k,
\\[2mm]
\zeta^k(0,x')=(p^{+}(0,x'))^{-1} \, p^k(0,x'), \quad \hfill x'\in Q,
\\[2mm]
\zeta^k(k,x')=(p^{+}(k,x'))^{-1} \, p^k(k,x'), \quad \hfill x'\in Q.
\end{array}
\right.
$$
Note that, in view of the exponential decay of $p^k$
$$
\zeta^k (0,x') \ge \delta_0 \, \big(\max \limits_{Q} p^{+}(0,x')\big)^{-1}, \quad
\zeta^k(k,x') \le Ce^{-\gamma_0 k} \, \big(\min \limits_{Q} p^{+}(k,x')\big)^{-1}.
$$
Denote
$$
M^k(x_1)\equiv \max \limits_{x' \in Q} \zeta^k(x_1,x'), \quad
m^k(x_1)\equiv \min \limits_{x' \in Q} \zeta^k(x_1,x').
$$
By the maximum principle,
$$
m^k(k) \le m^k(x_1) \le M^k(x_1) \le M^k(0), \quad x_1 \in (0,k),
$$
and $M^k(x_1)$ decreases on the interval $[0,\hat{x}_1]$ with $\hat{x}_1 = \min \{x: M^k(x) \le M^k(k)\}$. On the interval $[\hat{x}_1, k]$ (which might consist of only one point) we have $M^k(x_1) \le M^k(k)$.
Take $z$ and $y$ such that $z_1 \ge y_1$. Suppose $M^k(y_1) > M^k(k)$. Then, using the Harnack inequality, we obtain
$$
\alpha^{-1} \, m^k(y_1) \ge M^k(y_1) \ge \max \{M^k(z_1), M^k(k)\} \ge M^k(z_1),
$$
where $\alpha$ depends only on $\Lambda, d$ and $Q$.
If $M^k(y_1) \le M^k(k)$, then $M^k(z_1) \le M^k(k)$ for any $z_1 \ge y_1$ and
$$
m^k(y_1) \ge m^k(k) \ge \alpha \, M^k(k) \ge \alpha \, M^k(z_1).
$$
Thus,
$$
\zeta^k(z_1,z') \le \alpha^{-1} \, \zeta^k(y_1,y'), \quad z_1 \ge y_1, \,\, y',z' \in Q.
$$
Similar inequality takes place in $G_{-k}^0$.

To complete the proof it remains to note that, due to the Harnack inequality,
$$
\frac{\max_Q p^{+}}{\min_Q p^{+}} \le c_0.
$$
\end{proof}
Let us turn back to the estimation of $J_1^k$. By Proposition~\ref{propos-p^k-decrease},
$$
J_1^k \le C \, \sum \limits_{l=0}^{k-1} p^k(x_l) \|(1 + \sqrt{|x_1|}) f\|_{L^2(G_{l}^{k})}^2 \,  \|\nabla u^k\|_{L^2(G_{l}^{l+1})}^2,
$$
and applying again the Harnack inequality yields
$$
\begin{array}{l}
\displaystyle
J_1^k \le C \, \sum \limits_{l=0}^{k-1} \|(1 + \sqrt{|x_1|}) f\|_{L^2(G_{l}^{k})}^2 \,  \|\sqrt{p^k} \, \nabla u^k\|_{L^2(G_{l}^{l+1})}^2
\\[2mm]
\displaystyle
\le C \, \|(1 + \sqrt{|x_1|}) f\|_{L^2(G_0^{k})}^2 \, \|\sqrt{p^k} \, \nabla u^k\|_{L^2(G_{0}^{k})}^2.
\end{array}
$$
Similarly,
$$
J_2^k \le C \, \|(1 + \sqrt{|x_1|}) f\|_{L^2(G_{-k}^{0})}^2 \, \|\sqrt{p^k} \, \nabla u^k\|_{L^2(G_{-k}^{0})}^2,
$$
and, thus,
$$
\Big|\int \limits_{G_{0}^{k}} f \,p^k \, u^k \, dx\Big| \le  C \, \|(1 + \sqrt{|x_1|}) f\|_{L^2(G_{-k}^{k})}^2 \, \|\sqrt{p^k} \, \nabla u^k\|_{L^2(G_{-k}^{k})}^2.
$$
In the same way one can show that
$$
\Big|\int \limits_{G_{0}^{k}} r^k \,p^k \, u^k \, dx\Big| \le  C \, |r^k| \, \|\sqrt{p^k} \, \nabla u^k\|_{L^2(G_{-k}^{k})}^2.
$$
$$
\Big|\int \limits_{\Sigma_{0}^{k}} g \,p^k \, u^k \, d\sigma \Big| \le  C \, \|(1 + \sqrt{|x_1|}) g\|_{L^2(\Sigma_{-k}^{k})}^2 \, \|\sqrt{p^k} \, \nabla u^k\|_{L^2(G_{-k}^{k})}^2.
$$
Note that the factor $(1 + |x_1|)$ is not present in the estimate involving $r^k$: The function $r^k$ is supported on $G_{-1}^1$.

In view of \eqref{1.37},
$$
\|\sqrt{p^k} \, \nabla u^k\|_{L^2(G_{-k}^{k})} \le C \, \big( |r^k| + \|(1 + \sqrt{|x_1|}) f\|_{L^2(\GG)} +
\|(1 + \sqrt{|x_1|}) g\|_{L^2(\Sigma)}\big),
$$
and thus
$$
\|\nabla u^k\|_{L^2(G_{-1}^{1})} \le C \, \big( |r^k| + \|(1 + \sqrt{|x_1|}) f\|_{L^2(\GG)} +
\|(1 + \sqrt{|x_1|}) g\|_{L^2(\Sigma)}\big).
$$
Friedrichs' inequality yields
\begin{equation}
\label{1.39}
\begin{array}{l}
\displaystyle
\|u^k\|_{H^1(G_{-1}^{1})} \le C \, \big( |r^k| + \|(1 + \sqrt{|x_1|}) f\|_{L^2(\GG)} +
\|(1 + \sqrt{|x_1|}) g\|_{L^2(\Sigma)}\big)
\\[2mm]
\le C \, \big(\|(1 + \sqrt{|x_1|}) f\|_{L^2(\GG)} +
\|(1 + \sqrt{|x_1|}) g\|_{L^2(\Sigma)}\big).
\end{array}
\end{equation}
In this way
\begin{equation}
\label{1.40}
\|u^k\|_{H^{1/2}(S_0)} \le C \, \big(\|(1 + \sqrt{|x_1|}) f\|_{L^2(\GG)} +
\|(1 + \sqrt{|x_1|}) g\|_{L^2(\Sigma)}\big).
\end{equation}
It remains to obtain estimates for $u^k$ in $G_{0}^k$ being a solution of the problem
\begin{equation}
\label{1.41}
\left\{
\begin{array}{lcr}
A \, u^k = f+r^k, \quad \hfill x \in G_0^k,
\\[1mm]
B \, u^k = g, \quad \hfill x \in \Sigma_0^k,
\\[1mm]
u^k(0,x')=\psi^k(x'), \quad B\, u^k(k,x')=0, \quad \hfill x' \in Q,
\end{array}
\right.
\end{equation}
where $\psi^k(x')=u^k(0, x')$ satisfies the estimate \eqref{1.40}. The estimates for $u^k$ in $G_{-k}^0$ are obtained similarly. We proceed in two steps: At the first step we consider homogeneous problem with nonhomogeneous Dirichlet boundary condition on $S_0$ (Lemma~\ref{lemma-D-N-homogen}); then, at the second step, we study nonhomogeneous problem with zero Dirichlet boundary condition on $S_0$ (Lemma~\ref{lemma-D-N-nonhomogen}).
\begin{lemma}
\label{lemma-D-N-homogen}
For a solution $u^k$ of problem \eqref{1.41} with $f+r^k=g=0$ the following estimates hold:
$$
\begin{array}{c}
\displaystyle
\|u^k\|_{L^\infty(G_1^k)} \le C \, \|\psi^k\|_{H^{1/2}(Q)},
\\[2mm]
\displaystyle
\|\nabla u^k\|_{L^2(G_0^k)} +
\|u^k\|_{L^2(G_N^{N+1})} \le C \, \|\psi^k\|_{H^{1/2}(Q)}, \quad \forall N,
\end{array}
$$
with a constant $C$ independent of $k$.
\end{lemma}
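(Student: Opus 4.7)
The approach is to first convert the nonhomogeneous Dirichlet condition on $S_0$ into a compactly supported source via a boundary extension, then apply an energy estimate adapted from Lemma~\ref{PP-Th-v^k-2} using the sign condition $\bar b^{+}\ge 0$ to obtain global $L^2$ and $H^1$ bounds on $u^k$, and finally promote these to an $L^\infty$ bound on $G_1^k$ via the maximum principle together with a local De Giorgi--Nash--Moser estimate.

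First I would extend $\psi^k$ to a function $\Psi^k\in H^1(G_0^1)$ supported in $\overline{G_0^1}$ with $\Psi^k\big|_{S_0}=\psi^k$ and $\|\Psi^k\|_{H^1(G_0^1)}\le C\|\psi^k\|_{H^{1/2}(Q)}$, chosen so that it satisfies a homogeneous Neumann condition on $\Sigma_0^1$. Setting $w^k=u^k-\Psi^k$ (extended by zero for $x_1\ge 1$), the difference satisfies $w^k\big|_{S_0}=0$, homogeneous Neumann on $\Sigma_0^k\cup S_k$, and $A w^k=-A\Psi^k$ in the sense of distributions, the right-hand side being supported in $G_0^1$ and having $H^{-1}(G_0^k)$ norm bounded by $\|\Psi^k\|_{H^1}\le C\|\psi^k\|_{H^{1/2}(Q)}$. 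Using the a priori estimates of Lemma~\ref{PP-Th-v^k-2}, adapted to accept an $H^{-1}$ source (straightforward in the weak formulation since the source then produces a bilinear pairing with the test function controlled by $\|\Psi^k\|_{H^1}$), together with the hypothesis $\bar b^{+}\ge 0$, which prevents $w^k$ from growing at $+\infty$, one obtains
\[
\|\nabla w^k\|_{L^2(G_0^k)}+\|w^k\|_{L^2(G_N^{N+1})}\le C\|\psi^k\|_{H^{1/2}(Q)}
\]
uniformly in $k$ and in $N$. Adding back $\Psi^k$, which is supported in $G_0^1$ and bounded in $H^1$ by the same norm of $\psi^k$, gives the second line of the lemma.

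For the $L^\infty$ bound on $G_1^k$, observe that on this subcylinder $u^k$ solves the homogeneous equation $Au^k=0$ with homogeneous Neumann on $\Sigma_1^k\cup S_k$ and its own trace on $S_1$ as Dirichlet data. The weak maximum principle, valid up to the Neumann parts of the boundary by the usual reflection across $\Sigma_1^k$ and $S_k$ (as explained in the proof of Theorem~\ref{Th.1}), gives $\|u^k\|_{L^\infty(G_1^k)}\le\|u^k\|_{L^\infty(S_1)}$. A local De Giorgi--Nash--Moser $L^\infty$ estimate applied on $G_{1/2}^{3/2}$ then yields $\|u^k\|_{L^\infty(S_1)}\le C\|u^k\|_{L^2(G_{1/2}^{3/2})}$, and combining with the $L^2$ bound established in the previous step concludes the proof.

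The main obstacle I anticipate is justifying the global energy estimate when the source is only in $H^{-1}$, since Lemma~\ref{PP-Th-v^k-2} is stated for $L^2$ data with polynomial weights. I expect this to be surmountable by a direct energy argument in the finite cylinder using a cutoff near $S_k$ and the sign condition $\bar b^{+}\ge 0$ to absorb the convection boundary flux at $S_k$, but the bookkeeping required to keep the constant independent of $k$ will demand care, in particular to ensure that no boundary contribution from $S_k$ or from the non-periodic region $G_{-1}^1$ accumulates as $k\to\infty$.
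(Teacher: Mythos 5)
There is a genuine gap at the centre of your argument: the uniform-in-$k$ energy estimate for $w^k=u^k-\Psi^k$, which you treat as a routine adaptation of Lemma~\ref{PP-Th-v^k-2}, is precisely the hard part, and neither of your two routes to it works as stated. First, Lemma~\ref{PP-Th-v^k-2} concerns problem \eqref{prob_fin_cyl_general}, which carries a Dirichlet condition at $S_k$, whereas \eqref{1.41} has a Neumann condition there; it cannot be invoked for a different boundary condition (and for $H^{-1}$ rather than weighted $L^2$ data) without a new proof. Second, your fallback --- a direct energy argument testing with $w^k$ and using $\bar b^{+}\ge 0$ to control the flux at $S_k$ --- misses the real obstruction, which is interior rather than at $S_k$: for merely bounded measurable $b$ the term $\int_{G_0^k}(b\cdot\nabla w^k)\,w^k\,dx=\tfrac12\int b\cdot\nabla\big((w^k)^2\big)\,dx$ produces, after integration by parts, the distribution ${\rm div}\,b$, which has no sign and cannot be absorbed by the coercive term on a cylinder of length $k$ with a constant independent of $k$. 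The hypothesis on $\bar b^{+}$ is a statement about the effective drift of the periodic cell problem, not a pointwise sign of $b\cdot n$, so it does not rescue this computation. This is exactly why the paper never tests with $u^k$ alone: the gradient bound is obtained by testing with $p^{+}u^k$, where $p^{+}$ lies in the kernel of the adjoint periodic operator; this choice annihilates the first-order interior term and leaves only boundary terms on $S_1$ and $S_k$, controlled by $H^{1/2}$--$H^{-1/2}$ duality. But that weighted identity can only be closed once an $L^\infty$ bound on $u^k$ is already in hand, which forces the opposite order of argument from yours.

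A related smaller gap: even granting $\|\nabla w^k\|_{L^2(G_0^k)}\le C\|\psi^k\|_{H^{1/2}(Q)}$, the uniform-in-$N$ bound on $\|w^k\|_{L^2(G_N^{N+1})}$ does not follow, since with only $w^k|_{S_0}=0$ available Friedrichs' inequality gives $\|w^k\|_{L^2(G_N^{N+1})}\le C(1+\sqrt{N})\,\|\nabla w^k\|_{L^2(G_0^{N+1})}$, which degrades with $N$. The paper closes both loops by proving the $L^\infty$ bound \emph{first} and without any energy estimate: decompose $u^k$ on the short cylinder $G_0^1$ as $v^k+w^k$, with $v^k$ carrying the data $\psi^k$ on $S_0$ and vanishing on $S_1$, and $w^k$ vanishing on $S_0$ and equal to $u^k$ on $S_1$; the strong maximum principle gives $\|w^k\|_{L^\infty(S_{1/2})}\le\alpha\,\|u^k\|_{L^\infty(S_1)}$ with $\alpha<1$ independent of $k$, while the Neumann condition at $S_k$ gives $\|u^k\|_{L^\infty(S_1)}\le\|u^k\|_{L^\infty(S_{1/2})}$, and the resulting contraction yields $\|u^k\|_{L^\infty(S_1)}\le C(1-\alpha)^{-1}\|\psi^k\|_{H^{1/2}(Q)}$. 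Your final step (maximum principle on $G_1^k$ plus a local De Giorgi--Nash--Moser estimate near $S_1$) is sound in itself, but it sits downstream of the unproved energy bound; reversing the order as above is what makes the lemma go through.
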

\begin{proof}
In view of the maximum principle, since $B \, u^k(k,x')=0$,
$$
 \|u^k\|_{L^\infty(S_1)} \leq \|u^k\|_{L^\infty(S_{1/2})}.
$$
In the cylinder $G_0^1$ we represent $u^k$ as a sum $v^k + w^k$, where $v^k$ and $w^k$ satisfy the homogeneous equation and homogeneous boundary conditions on $\Sigma_{-k}^k$, $v^k(0,x') = \psi^k(x')$, $v^k(1,x') = w^k(0,x') = 0$, $w^k(1,x') = u^k(1,x')$. Then the function $v^k(x)$ satisfies the following estimate:
$$
 \|v^k\|_{H^1(G_0^1)} + \|v^k\|_{L^\infty(S_{1/2})} \leq C \, \|\psi^k\|_{H^{1/2}(Q)}.
$$
By the strong maximum principle,
$$
 \|w^k\|_{L^\infty(S_{1/2})} \leq \alpha \, \|u^k\|_{L^\infty(S_1)},
$$
where $0< \alpha < 1$, $\alpha$ does not depend on $k$.
In this way we obtain
$$
 \|u^k\|_{L^\infty(S_1)} \leq \|u^k\|_{L^\infty(S_{1/2})} \leq
 \|v^k\|_{L^\infty(S_{1/2})} + \|w^k\|_{L^\infty(S_{1/2})}
 $$
 $$
 \leq C \|\psi^k\|_{H^{1/2}(Q)} + \alpha \, \|u^k\|_{L^\infty(S_1)}, \quad \alpha < 1,
$$
which yields
$$
 \|u^k\|_{L^\infty(S_1)} \leq \frac{C}{1 - \alpha} \, \|\psi^k\|_{H^{1/2}(Q)}, \quad 0 < \alpha < 1.
$$
%Moreover,
%$$
% \|w^k\|_{L^2(G_0^1)} \leq C \, \|\psi^k\|_{H^{1/2}(Q)},
%$$
%where $C$ does not depend on $k$.
Applying the maximum principle once more we obtain
\begin{equation}
\label{1.42bis}
 \|u^k\|_{L^\infty(G_1^k)} \leq C \, \|\psi^k\|_{H^{1/2}(Q)},
\end{equation}
with $C$ independent on $k$. It follows from \eqref{1.42bis} that
\begin{equation}
\label{1.42}
 \|u^k\|_{L^{2}(G_N^{N+1})}  + \|\nabla u^k\|_{L^2(G_0^2)}\le C\|\psi^k\|_{H^{1/2}(Q)}, \quad N \geq 0.
\end{equation}
Let us note that \eqref{1.42} is valid in $L^\infty(G_\delta^k)$, for any $\delta > 0$.

We proceed with estimating $\|\nabla u^k\|_{L^2(G_0^k)}$.

Multiplying the equation in \eqref{1.41} by $p^{+} u^k$ and integrating the resulting relation by parts over $G_1^k$ gives
$$
\int \limits_{G_1^k} (a \nabla \, u^k, \nabla \, u^k) \, p \, dx =
\int \limits_{S_1} u^k p^{+} \,\frac{\partial u^k}{\partial n_a} \, dx' -\frac{1}{2} \, \Big\{\int\limits_{S_1} -
\int\limits_{S_k}\Big\} \, (u^k)^2\,(a_{1j}\frac{\partial p^{+}}{\partial x_j} - b_1 p^{+}) \, dx'.
$$

Since both $p^{+}(x)$ and $u^k(x)$ are elements of $H^1(G_1^k) \cap L^\infty(G_1^k)$, $p^{+} \, u^k \in H^1(G_1^2)$ and
$$
\|p^{+} u^k\|_{H^1(G_1^2)}\le C \, \|\psi^k\|_{H^{1/2}(Q)}.
$$
Since $\mathrm{div}\big(a\nabla u^k\big) \in L^2(G_0^k)$ and $\mathrm{div}\big(a\nabla p^{+}-b \, p^{+} \big)=0$, then the normal components
of $(a\nabla u^k\big)$ and $\big(a\nabla p^{+}-b \, p^{+}\big)$ on $S_1$ are well-defined elements of $H^{-1/2}(Q)$ (see \cite{DuvautLions}), and the inequality holds
\begin{equation}
\label{1.43}
\|a_{1j}\partial_{x_j}u^k\|_{H^{-1/2}(Q)}\le C\|\psi^k\|_{H^{1/2}(Q)},\quad \|a_{1j}\partial_{x_j} p^{+}- b_1 p^{+}\|_{H^{-1/2}(Q)}\le C.
\end{equation}

Taking into account \eqref{1.42bis} and \eqref{1.43}, we estimate the integral on the left-hand side as follows
$$
\int \limits_{G_1^k} (a \nabla \, u^k, \nabla \, u^k) \, p \, dx \le C \, \big(
\|\psi^k\|^2_{H^{1/2}(Q)}+ \|\psi^k\|_{H^{1/2}(Q)} \, \|\nabla u^k\|_{L^2(G_1^k)}\big).
$$
Finally
$$
   \|\nabla \, u^k\|_{L^2(G_0^k)}  \leq C\|\psi^k\|_{H^{1/2}(Q)},
$$
where $C$ does not depend on $k$. Lemma~\ref{lemma-D-N-homogen} is proved.
\end{proof}
The next statement deals with the nonhomogeneous equation with zero Dirichlet boundary condition on the base $S_0$ and homogeneous Neumann boundary condition on $S_k$.
\begin{lemma}
\label{lemma-D-N-nonhomogen}
Let $u^k$ be a solution of problem \eqref{1.41} with $\psi^k = 0$. Then the following estimate is valid:
$$
\|u^k\|_{L^2(G_N^{N+1})} +
\|\nabla u^k\|_{L^2(G_0^k)} \le C\,M, \quad \forall N,
$$
with the constant $M$ having the form
$$
M =  \|(1 + x_1^2)f\|_{L^2(G_0^k)} +  \|(1 + x_1^2)g\|_{L^2(\Sigma_0^k)}.
$$
\end{lemma}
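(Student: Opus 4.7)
The plan is to follow the pattern of the energy estimate used in the proof of Theorem~\ref{Th.3}, adapted to the one-sided cylinder $G_0^k$ with Dirichlet base at $S_0$ and no-flux base at $S_k$. I would test the equation in \eqref{1.41} against $p^{+}u^k$, where $p^{+}>0$ is the periodic ground state from \eqref{eq:p-peridic} associated with the right-hand coefficients; in contrast with $p^k$ used in the proof of Theorem~\ref{Th.3}, the function $p^{+}$ is bounded above and below on the periodic part $G_1^k$ by positive constants, so no weighted absorption is needed. Integrating by parts, the Dirichlet condition $u^k|_{S_0}=0$ and the no-flux condition on $S_k$ annihilate the terms with $a\nabla u^k\cdot n$ at the two bases; on the lateral boundary the normal flux equals $g$, producing $-\int_{\Sigma_0^k}g\,p^{+}u^k\,d\sigma$. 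In $G_1^k$ the bulk terms involving derivatives of $p^{+}$ cancel by virtue of $A^{*}p^{+}=0$ and $B^{*}p^{+}=0$, so only the $S_1$ and $S_k$ boundary contributions of $p^{+}$ survive.

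The resulting identity takes the form
\begin{equation*}
\int_{G_0^k}(a\nabla u^k,\nabla u^k)\,p^{+}\,dx+\tfrac12\!\int_{S_k}(u^k)^2\bigl(a_{11}\partial_1 p^{+}+b_1 p^{+}\bigr)dx'=\int_{G_0^k}(f+r^k)p^{+}u^k\,dx+\int_{\Sigma_0^k}g\,p^{+}u^k\,d\sigma+R,
\end{equation*}
where $R$ collects the discrepancy terms coming from the non-periodic slab $G_0^1$ and the junction slice $S_1$. The slice flux of $p^{+}$ is independent of $x_1$ and equals $\bar b^{+}\ge 0$, so the boundary term on $S_k$ is non-negative and stays on the left. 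Friedrichs' inequality on $G_0^1$ (available because $u^k$ vanishes on $S_0$), combined with the trace inequality $\|u^k\|_{L^2(S_1)}^2\le C\|\nabla u^k\|_{L^2(G_0^2)}^2$, lets me absorb $|R|$ into the ellipticity term.

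The right-hand side is controlled by the weighted Cauchy--Schwarz argument used in Theorem~\ref{Th.3}. From $\|u^k\|_{L^2(G_0^1)}\le C\|\nabla u^k\|_{L^2(G_0^1)}$ and a one-dimensional Minkowski telescoping in $x_1$, one obtains $\|u^k\|_{L^2(G_N^{N+1})}\le C\sqrt{1+N}\,\|\nabla u^k\|_{L^2(G_0^k)}$ for every $N\ge 0$. Summing against the slice norms of $f$ and applying Cauchy--Schwarz in $N$ with the summable weight $(1+N)^{-3}$ yields
\begin{equation*}
\Bigl|\int_{G_0^k}f\,p^{+}u^k\,dx\Bigr|\le C\,\|(1+x_1^2)f\|_{L^2(G_0^k)}\,\|\nabla u^k\|_{L^2(G_0^k)},
\end{equation*}
and analogously for the boundary integral with $g$. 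The $r^k$ contribution is harmless because $r^k$ is supported in $G_{-1}^1$ and $|r^k|\le CM$. Absorbing $\|\nabla u^k\|_{L^2}$ using uniform positivity of $p^{+}$ and ellipticity of $a$ produces $\|\nabla u^k\|_{L^2(G_0^k)}\le CM$.

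For the local $L^2$ bound I would argue in two regimes. Near the Dirichlet base, Poincar\'e together with the gradient bound gives $\|u^k\|_{L^2(G_0^1)}\le CM$. Beyond the support of $f$ and $g$, the function $u^k$ satisfies a homogeneous periodic problem in a semi-infinite cylinder with $\bar b^{+}\ge 0$, so Theorem~\ref{PP-Th-semi-inf} yields its exponential stabilisation to a constant $K^k$ (or to a periodic profile when $\bar b^{+}=0$), with $|K^k|$ controlled by the trace of $u^k$ on a fixed interior slice and hence by $CM$. The main technical obstacle will be the clean handling of the junction remainder $R$ and the bookkeeping of the $S_k$ boundary integral with the non-constant coefficient $a_{11}\partial_1 p^{+}+b_1 p^{+}$; the slice-flux identity $\int_Q(a_{11}\partial_1 p^{+}+b_1 p^{+})\,dx'=\bar b^{+}$ is precisely what makes that term sign-favourable and forces the whole estimate through.
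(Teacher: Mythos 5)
Your energy identity is set up correctly, but the step on which the whole estimate hinges fails: you claim that the base term
\begin{equation*}
\tfrac12\int_{S_k}(u^k)^2\bigl(a_{1j}\partial_j p^{+}+b_1 p^{+}\bigr)\,dx'
\end{equation*}
is non-negative because the slice flux $\int_Q\bigl(a_{1j}\partial_j p^{+}+b_1 p^{+}\bigr)\,dx'$ equals $\bar b^{+}\ge 0$. That identity only controls the \emph{integral} of the flux density over the cross-section; the density itself is not pointwise non-negative (indeed, $a\nabla p^{+}\cdot e_1$ on a slice is merely an $H^{-1/2}(Q)$ object, so a pointwise sign is not even meaningful), and weighting it by $(u^k)^2$ can produce a large negative contribution wherever $u^k$ concentrates on the set where the density is negative. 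To convert the flux identity into a sign you would need $u^k$ to be essentially constant on $S_k$ with a quantitative bound on its size there --- which is precisely the a priori information the lemma is trying to establish, so the argument is circular. The paper sidesteps this by testing with $p^k u^k$ rather than $p^{+}u^k$: since $p^k$ satisfies the adjoint Neumann condition $a\nabla p^k\cdot n+(b\cdot n)p^k=0$ on $S_k$, the offending boundary term vanishes identically. The price is that $p^k$ decays exponentially, which is why Proposition~\ref{propos-p^k-decrease} is needed to divide the weighted energy inequality by $\min_Q p^k(n+1,\cdot)$ and recover an unweighted gradient bound.

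A second gap is in your final paragraph. You invoke ``beyond the support of $f$ and $g$'' to get the uniform-in-$N$ bound $\|u^k\|_{L^2(G_N^{N+1})}\le CM$, but under $\bf(H3)$ the data are not compactly supported, only exponentially decaying, so there is no such region; your Friedrichs argument alone only yields $\|u^k\|_{L^2(G_N^{N+1})}\le C\sqrt{1+N}\,M$, which is weaker than the claim. The paper's device for both difficulties is the slab decomposition $f=\sum_n f_n$, $g=\sum_n g_n$ with $f_n,g_n$ supported in $G_n^{n+1}$: each $u_n^k$ solves a genuinely homogeneous problem in $G_{n+1}^k$, so Lemma~\ref{lemma-D-N-homogen} applies there with Dirichlet data on $S_{n+1}$ controlled by the localized energy estimate, and the summation over $n$ via Cauchy--Schwarz is exactly what produces the weight $(1+x_1^2)$. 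Your weighted Cauchy--Schwarz computation for the right-hand side is fine as far as it goes, but without replacing $p^{+}$ by $p^k$ (or otherwise neutralizing the $S_k$ term) and without the slab localization, the proof does not close.
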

\begin{proof}
Let us consider a sequence of auxiliary problems
\begin{equation}
\label{1.44}
 \left\{
 \begin{array}{lcr}
  A\, u_n^k = f_n + r_n^k,  \quad \hfill x \in G_0^k,
  \\[2mm]
  \displaystyle{
 B\, u_n^k = g_n}, \quad \hfill x
  \in \Sigma_0^k,
  \\[2mm]
  \displaystyle{
  {u_n^k}(0, x') = 0, \quad {B \, u_n^k}(k, x') = 0}, \quad \hfill x' \in Q.
 \end{array}
 \right.
\end{equation}
Here $f_n(x) = f(x) \chi(G_n^{n+1})$, $r_n^k(x) = r^k \, \chi(G_n^{n+1})$ and $g_n(x) = g(x) \chi(G_n^{n+1})$, $\chi(G_\alpha^\beta)$ is a characteristic function of $G_\alpha^\beta$. Multiplying the equation in \eqref{1.44} by  $p^k(x)u_n^k(x)$ and
integrating by parts over $G_0^k$ gives
$$
\int \limits_{G_0^k} (a \nabla u_n^k, \nabla u_n^k) \, p^k \, dx =
\int \limits_{G_n^{n+1}} (f_n + r^k) \, p^k \, u_n^k \, dx +
\int \limits_{G_n^{n+1}} g_n \, p^k \, u_n^k \, d\sigma.
$$
Friedrichs' inequality reads
$$
\|u_n^k\|_{L^2(G_n^{n+1})} \le (1 + \sqrt{n}) \, \|\nabla u_n^k\|_{L^2(G_0^{n+1})}.
$$
Then, using the Harnack inequality for $p^k$ we obtain
\begin{align*}
\int \limits_{G_0^{n+1}} (a \nabla u_n^k, \nabla u_n^k) \, p^k \, dx &\le
C \, \min \limits_{x' \in Q} p^k(n+1, x') \, \big(
(1 + \sqrt{n}) \|f_n\|_{L^2(G_n^{n+1})} + |r_n^k| \big.
\\
\big.& + (1 + \sqrt{n}) \|g_n\|_{L^2(\Sigma_n^{n+1})} \big) \, \|\nabla u_n^k\|_{L^2(G_0^{n+1})}.
\end{align*}
Dividing bothe sides of the last inequality by $\min_{Q} p^k(n+1,x')$ and using Proposition~\ref{propos-p^k-decrease} to estimate $p^k$, we obtain
\begin{align*}
\beta \, \Lambda \, \|\nabla u_n^k\|_{L^2(G_0^{n+1})}^2 \le
\Lambda \, \min \limits_{G_0^{n+1}} p^k(x) \, (\min \limits_{Q} p^k(n+1,x'))^{-1} \,
\|\nabla u_n^k\|_{L^2(G_n^{n+1})}^2
\\
\le C \, \big(
(1 + \sqrt{n}) \|f_n\|_{L^2(G_n^{n+1})} + |r_n^k|
+ (1 + \sqrt{n}) \|g_n\|_{L^2(\Sigma_n^{n+1})} \big) \, \|\nabla u_n^k\|_{L^2(G_0^{n+1})}.
\end{align*}
Consequently,
$$
\|\nabla u_n^k\|_{L^2(G_0^{n+1})} \le
C \, \big(
(1 + \sqrt{n}) \|f_n\|_{L^2(G_n^{n+1})} + |r_n^k|  + (1 + \sqrt{n}) \|g_n\|_{L^2(\Sigma_n^{n+1})} \big),
$$
and by the Friedrichs' inequality for $m \le n$
$$
\|u_n^k\|_{L^2(G_m^{m+1})} \le
C \, \big(
(1 + n) \|f_n\|_{L^2(G_n^{n+1})} + (1+\sqrt{n}) |r_n^k|  + (1 + n) \|g_n\|_{L^2(\Sigma_n^{n+1})} \big).
$$
Thus,
$$
\|u_n^k\|_{H^{1/2}(S_{n+1})} \le C \, \big(
(1 + n) \|f_n\|_{L^2(G_n^{n+1})} + (1+\sqrt{n}) |r_n^k|  + (1 + n) \|g_n\|_{L^2(\Sigma_n^{n+1})} \big).
$$
By Lemma~\ref{lemma-D-N-homogen} we have
$$
\|\nabla u_n^k\|_{L^2(G_0^{k})} \le
C \, \big(
\|(1 + x_1) f_n\|_{L^2(G_n^{n+1})} + |r_n^k|  + \|(1 + x_1)g_n\|_{L^2(\Sigma_n^{n+1})} \big),
$$
$$
\|u_n^k\|_{L^2(G_m^{m+1})} \le
C \, \big(
\|(1+x_1) f_n\|_{L^2(G_n^{n+1})} + (1+\sqrt{n}) |r_n^k|  + \|(1+x_1) g_n\|_{L^2(\Sigma_n^{n+1})} \big), \quad \forall m.
$$
Obviously, $u^k = \sum_{n=0}^{k-1}u_n^k$ solves problem \eqref{1.41} with $\psi^k = 0$.

By the Cauchy-Schwarz inequality
\begin{align*}
\sum_{n=0}^{k-1} \|(1 + x_1) f_n\|_{L^2(G_n^{n+1})} & \le
\Big(\sum_{n=0}^{k-1} \frac{1}{x_1^2} \Big)^{1/2} \,
\Big(\sum_{n=0}^{k-1} \|(1 + x_1^2) f_n\|_{L^2(G_n^{n+1})}^2 \Big)^{1/2}
\\
& \le C \,
\|(1 + x_1^2) f\|_{L^2(G_0^{k})}.
\end{align*}
Taking into account that ${\rm supp}( r^k) \subset G_{-1}^1$, we get
$$
\|\nabla u^k\|_{L^2(G_0^k)} \le C \,\big( \|(1 + x_1^2) f\|_{L^2(G_0^{k})} + |r^k| +
\|(1 + x_1^2) g\|_{L^2(\Sigma_0^{k})} \big);
$$
$$
\|u^k\|_{L^2(G_N^{N+1})} \le C \,\big( \|(1 + x_1^2) f\|_{L^2(G_0^{k})} + |r^k| +
\|(1 + x_1^2) g\|_{L^2(\Sigma_0^{k})} \big).
$$
Lemma~\ref{lemma-D-N-nonhomogen} is proved.
\end{proof}
Combining \eqref{1.39}, Lemmata~\ref{lemma-D-N-homogen} and \ref{lemma-D-N-nonhomogen}, one can see that for the solution $u^k$ of problem \eqref{prob_fin_cyl} the estimates hold
\begin{equation}
\label{estimate_u^k}
\|u^k\|_{L^2(G_N^{N+1})} \leq C\, \left( \|(1 + x_1^2) f\|_{L^2(\GG)} + \|(1 + x_1^2) g\|_{L^2(\Sigma)} + |r^k|\right),
\end{equation}
\begin{equation}
\label{estimate_nablau^k}
\|\nabla u^k\|_{L^2(G_{-k}^k)} \leq C\, \left( \|(1 + x_1^2) f\|_{L^2(\GG)} + \|(1 + x_1^2) g\|_{L^2(\Sigma)}+ |r^k|\right)
\end{equation}
with $C$ independent of $k$. Hence, up to a subsequence, $u^k$ converges weakly in the space $H_{loc}^1(\GG)$, as $k \to \infty$, to a solution $u(x)$ of problem \eqref{eq:orig-prob} which satisfies estimates \eqref{est-u-Th.3}.

The stabilization of $u$ to constants at infinity is ensured by Theorem~\ref{PP-Th-semi-inf}. The uniqueness of the solution can be proved in the same way as in Theorem~\ref{Th.2}. $\square$

%==========================================================================================================================

%*************************************************************************************************************************
\section{Main results in the semi-infinite cylinder.}
\label{s_5}
For the readers convenience, in this section we summarize the results obtained in \cite{PaPi} for a solution in a semi-infinite cylinder.

Let $G = (0, \infty) \times Q$ be a semi-infinite cylinder in
$\mathbb{R}^d$ with the axis directed along $x_1$, where $Q$ is a bounded
domain in $\mathbb{R}^{d-1}$ with a Lipschitz boundary
$\partial Q$. The lateral boundary of $G$ is denoted by $\Sigma = (0, + \infty) \times \partial Q$. We study the following boundary-value problem:
\begin{equation}
\label{PP-1_1} \left\{
\begin{array}{lcr}
\displaystyle{
    - {\rm div} \, \left(a(x) \, \nabla \, u(x)\right) + b(x)\cdot \nabla \, u(x) = f,}
    \quad \hfill x \in G,
    \\[1mm]
\displaystyle{
    a\nabla u\cdot n = g,} \quad \hfill x \in \Sigma,
    \\[1mm]
\displaystyle{
    u(0, x') = \varphi(x'),} \quad \hfill x' \in Q.
\end{array}
\right.
\end{equation}
Here $a(x)$ is a $d \times d$ matrix satisfying the uniform ellipticity condition, and $b(x)$ is a vector in $\mathbb{R}^d$, $\varphi(x') \in H^{1/2}(Q)$. The
matrix-valued function $a(x)$ and the vector field $b(x)$ are supposed to be measurable, bounded and periodic in $x_1$ functions. The periodicity of the coefficients can be perturbed in some fixed finite cylinder, this will not affect the result.

Concerning the functions $f$ and $g$ we suppose that $f(x) \in L^2(G)$, $g(x) \in L^2(\Sigma)$, and that these functions decay exponentially as $x_1$ goes to infinity, i.e. for some $\gamma_1 >0$
\begin{equation}
\label{PP-7_2}
\begin{array}{c}
\displaystyle
 \| f \|_{L^2(G_N^{N+1})} +
 \| g \|_{L^2(\Sigma_N^{N+1})} \leq C \, e^{- \gamma_1 \, N}, \quad N>0.
 \end{array}
\end{equation}
Let us introduce an auxiliary function $p(x)$ which belongs to the null space of the adjoint operator
\begin{equation}
 \label{PP-problem_for_p}
 \left\{
 \begin{array}{lcr}
    - {\rm div} (a \nabla \, p) - {\rm div}\, (b \, p) = 0, \quad
    \hfill x \in Y,
    \\[1mm]
    a\nabla p\cdot n - (b\cdot n) \, p = 0, \quad \hfill x \in \partial
    Y.
 \end{array}
 \right.
\end{equation}
and the effective convection
\begin{equation}
\label{PP-5_1}
 \bar{b}_1 = \int \limits_{G_0^1} \left( a_{1 j}(x) \partial_j p(x) +
    b_1(x) p(x) \right) \, dx,
\end{equation}

\begin{theorem}
\label{PP-Th-semi-inf}
$ $

\begin{enumerate}
  \item
  Any bounded solution $u(x)$ of problem (\ref{PP-1_1}) stabilizes to a
  constant at the exponential rate as $x_1 \to \infty$, that is
  $$
   \| u(x) - C_\infty\|_{L^2(G_n^\infty)} \leq C\, M \, e^{- \gamma \, n}, \quad
   \forall n \geq 0,
  $$
for some $C_0 > 0$ and $\gamma > 0$, $M = \|(1+x_1^2)f\|_{L^2(G)} + \|(1+x_1^2)g\|_{L^2(\Sigma)}$;
  \item
  $\bar{b}_1 < 0$ if and only if
  for any $\varphi(x') \in H^{1/2}(Q)$ and for any constant $l \in \mathbb{R}$,
  there exists a bounded solution $u(x)$ of problem (\ref{PP-1_1}) that
  converges to the constant $l$, as $x_1 \to \infty$;

  \item
  $\bar{b}_1 \geq 0$ if and only if
   for every boundary condition $\varphi(x')$ there exists a unique
   constant $m(\varphi)$ such that a bounded solution of
   problem (\ref{PP-1_1}) converges to this constant as $x_1 \to
   \infty$.
\end{enumerate}
\end{theorem}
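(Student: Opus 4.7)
The plan is to base everything on the positive periodic weight $p(x)$ of \eqref{PP-problem_for_p} together with the flux identity
\[
\int_{S_{x_1}}\bigl(a_{1j}\partial_j p+b_1 p\bigr)\,dx'\equiv\bar b_1\quad\text{for every }x_1,
\]
which follows by integrating $A^\ast p=0$ over a thin slice $G_{x_1}^{x_1+\varepsilon}$ and using the adjoint lateral Neumann condition on $\Sigma$. Pairing $Au=f$ against $p$ on $G_0^N$ and integrating by parts twice produces, in the case $f=g=0$, a conserved current
\[
J(x_1)=\int_{S_{x_1}}\bigl[p\,a_{1j}\partial_j u-u\,(a_{1j}\partial_j p+b_1 p)\bigr]\,dx',
\]
whose limit as $x_1\to\infty$ equals $-C_\infty\bar b_1$. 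This algebraic identity is the backbone of all three assertions.

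For part (1) I would first subtract a particular solution associated with $(f,g)$: using the exponential decay \eqref{PP-7_2}, the truncated Dirichlet problems on $G_0^k$ with zero data on $S_0\cup S_k$ are uniformly bounded via weighted energy estimates in the style of Lemma~\ref{PP-Th-v^k-2}, and their limit decays exponentially in $x_1$. For the remaining homogeneous piece, a Saint-Venant argument applied to the weighted Dirichlet tail $E(N)=\int_{G_N^\infty} p\,a\nabla u\cdot\nabla u\,dx$, with the weighted Poincaré inequality on $G_N^{N+1}$ and the conserved current $J$ controlling the boundary term, yields $E(N+1)\le\theta E(N)$ for some $\theta\in(0,1)$. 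Friedrichs combined with the Harnack inequality (extended up to $\Sigma$ by the reflection trick used repeatedly earlier in the paper) then upgrades this to exponential stabilization of $u$ to a constant.

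The equivalences in (2) and (3) I would establish by approximating on $G_0^k$ with the choice of far-end boundary condition dictated by the sign of $\bar b_1$. When $\bar b_1<0$, the Dirichlet problem with $u_k(0,\cdot)=\varphi$ and $u_k(k,\cdot)=l$ is uniformly bounded by the maximum principle; the sign $\bar b_1<0$ forces the far-end datum $l$ to propagate back to finite $x_1$ with only exponentially small corrections, and the limit in $G$ converges to $l$. When $\bar b_1\ge 0$, I would instead use the Neumann far-end condition $Bu_k=0$ on $S_k$, motivated by Examples~1 and 2 of Section~\ref{s_4}; weighted energy estimates give a uniform bound and the limit converges to a unique constant $m(\varphi)$. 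The forward directions close by contradiction: if for $\bar b_1\ge 0$ two different stabilization constants were admissible for the same $\varphi$, their difference $w$ would be a bounded homogeneous solution with $w(0,\cdot)=0$ tending to some $K\ne 0$, and constancy of $J$ together with a maximum-principle barrier comparison against constants forces $K=0$; conversely, if $\bar b_1<0$, the one-parameter family of solutions of the homogeneous problem produced in the existence argument furnishes a stabilizing solution for every prescribed $l$.

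The main obstacle is the borderline case $\bar b_1=0$ in part (3): the conserved current becomes $J\equiv 0$ and by itself carries no information about the limiting constant, while the Neumann truncation simultaneously loses coercivity in the natural weighted norm. Resolving this requires a sub-leading argument, based either on a refined Saint-Venant estimate with polynomial weights or on testing against an auxiliary linear-growth solution of the adjoint cell problem, to extract a non-degenerate flux that forces $K=0$; this borderline calculation is the technical heart of \cite{PaPi}.
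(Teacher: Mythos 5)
First, a point of order: the paper does not prove Theorem~\ref{PP-Th-semi-inf} at all --- Section~\ref{s_5} explicitly states that it is a summary of results from \cite{PaPi}, quoted for the reader's convenience, so there is no in-paper proof to compare against line by line. Judged against the toolkit that \cite{PaPi} and the present paper actually use (factorization by the periodic adjoint kernel $p$, truncated cylinders $G_0^k$ with a far-end condition chosen according to the sign of $\bar b_1$, the flux/conserved-current identity, Harnack and the maximum principle up to $\Sigma$ via reflection), your outline is the right one: the conserved current $J$ and its limit $-C_\infty\bar b_1$, Dirichlet far-end data for $\bar b_1<0$ versus Neumann far-end data for $\bar b_1\ge 0$, and the Hopf-type sign argument showing that a bounded homogeneous solution vanishing on $S_0$ and tending to $K\ne 0$ contradicts $\bar b_1>0$, are all faithful to the source. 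Where you genuinely diverge is part (1): \cite{PaPi} obtains stabilization through the factorized equation and comparison/oscillation-decay arguments (this is what underlies Lemma~\ref{PP-Th-v^k-1}), whereas you propose a weighted Saint-Venant iteration on $E(N)=\int_{G_N^\infty}p\,a\nabla u\cdot\nabla u\,dx$. That route can be made to work, but as written it has two holes. First, $E(N)$ is not known to be finite a priori: boundedness of $u$ gives only $\|\nabla u\|_{L^2(G_N^{N+1})}\le C$ uniformly via Caccioppoli, so the tail energy could be infinite and the inequality $E(N+1)\le\theta E(N)$ vacuous; you must first run the weighted energy identity on $G_0^M$ and average the cross-sectional trace terms over $M\in[N,N+1]$ to get $\sup_M\int_{G_0^M}p|\nabla u|^2\le C$. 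Second, the cross-sectional boundary term in that identity contains $\tfrac12\int_{S_N}(a_{1j}\partial_jp+b_1p)u^2\,dx'$, whose mean part is $\bar b_1\cdot(\text{mean of }u^2)$ and is \emph{not} controlled by the local Dirichlet energy; the conserved current $J$ does not neutralize it by itself, and one must subtract a suitable ($N$-dependent) constant from $u$ before applying Poincar\'e, then track how those constants converge. This is exactly the bookkeeping that the factorization argument of \cite{PaPi} avoids.

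The other substantive gap is the one you flag yourself: uniqueness of the limiting constant when $\bar b_1=0$, where $J\equiv0$ carries no information. The resolution in \cite{PaPi} is the quantitative statement \eqref{estimate_v^k_b1=0}: the truncated solution with data $0$ on $S_0$ and $K$ on $S_k$ is within $C(\|\varphi\|_{L^\infty}+K)/k$ of the linear interpolation $Kx_1/k$, so comparison of an arbitrary bounded homogeneous solution $w$ with $w(0,\cdot)=0$ against $v^k$ with $K=\pm\|w\|_{L^\infty}$ forces $w\equiv0$ on compacts as $k\to\infty$. Your suggestion of testing against a linear-growth solution of the adjoint cell problem is indeed the mechanism behind that estimate, but in a self-contained proof this step cannot be deferred --- it is precisely the content that distinguishes (3) from (2) at the borderline. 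Finally, note that both equivalences are stated as ``if and only if''; your sketch supplies the existence direction for $\bar b_1<0$ and the uniqueness direction for $\bar b_1>0$, and once the $\bar b_1=0$ comparison above is in place the remaining implications follow by exclusion, but that logical closing step should be said explicitly.
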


The existence of a solution to \eqref{PP-1_1} has been proved using auxiliary problems in finite growing cylinders.
Namely, we consider the following problems in $G_0^k$:
\begin{equation}
\label{prob_fin_cyl_general}
\left\{
\begin{array}{lcr}
\displaystyle{
    - {\rm div} \, \left(a(x) \, \nabla \, v^k\right) + b(x)\cdot \nabla \, v^k = f,}
    \quad \hfill x \in G_0^k,
    \\[2mm]
\displaystyle{
    a\nabla v^k\cdot n = g,} \quad \hfill x \in \Sigma_0^k,
    \\[2mm]
\displaystyle{
    v^k(0, x') = \varphi(x'), \quad v^k(k, x') = K} \quad \hfill x' \in Q,
\end{array}
\right.
\end{equation}
where $\varphi(x') \in L^\infty(Q)$, $K$ is a constant.

The following statements characterizes the asymptotic behavior of $v^k$.

\begin{lemma}
\label{PP-Th-v^k-1} \textbf{The case $f=g=0$.}
\begin{enumerate}
\item
If $\bar{b}_1 > 0$ then there exist constants $C_\varphi^\infty, \gamma_0 > 0$ and $\gamma>0$ such that
\begin{equation}
\label{estimate_v^k_b1<0}
|v^k - C_\varphi^\infty| \leq C_0 \, \|\varphi\|_{L^\infty(Q)} \, \left( e^{-\gamma_0 x_1} + e^{- \gamma(k - x_1)} \right) +
C \, K \, e^{-\gamma (k -x_1)}.
\end{equation}
In the case of a constant $\varphi$, $C_\varphi^\infty = \varphi$ and
\begin{align*}
|v^k - \varphi| \leq C_0 \, (|\varphi| + K) \, e^{- \gamma(k - x_1)}.
\end{align*}
\item
If $\bar{b}_1 < 0$ then there exists $\gamma>0$ such that
\begin{equation}
\label{estimate_v^k_b1>0}
|v^k - K| \leq C_0 \, (\|\varphi\|_{L^\infty(Q)} + K) \, e^{-\gamma x_1}.
\end{equation}
\item
If $\bar{b}_1 = 0$ then in $G_0^k$ the function $v^k$ is close to a linear function:
\begin{equation}
\label{estimate_v^k_b1=0}
\left|v^k - \frac{C_\varphi^\infty (k-x_1) + K x_1}{k}\right| \leq C_0 \, \|\varphi\|_{L^\infty(Q)} \, e^{-\gamma_0 x_1} +
\frac{C}{k} \, \left( \|\varphi\|_{L^\infty(Q)} + K \right).
\end{equation}
\end{enumerate}
The constant $C_\varphi^\infty$ is uniquely defined (see Lemma~5.1 in \cite{PaPi}).
\end{lemma}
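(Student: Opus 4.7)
My plan is to prove all three parts of Lemma~\ref{PP-Th-v^k-1} by reducing the finite-cylinder problem to a combination of semi-infinite-cylinder solutions (handled by Theorem~\ref{PP-Th-semi-inf}) plus a correction controlled by a suitable exponential barrier built from the positive periodic function $p(x)$ solving the adjoint cell problem~\eqref{PP-problem_for_p}. The organizing algebraic identity is that the field $\Phi(v) := p\,a\nabla v - (a\nabla p + bp)v$ is divergence-free whenever $Lv=0$, because $L^*p = 0$; combined with the lateral conditions $a\nabla v\cdot n = 0$ and $(a\nabla p+bp)\cdot n=0$ this makes the axial flux $\int_{S_{x_1}}\Phi\cdot e_1\,dx'$ independent of $x_1$, and when $v$ stabilizes to a constant $C_\infty$ at infinity the flux equals $-\bar b_1\,C_\infty$. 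This conservation law encodes the dichotomy between the three cases.

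For Part~1 ($\bar b_1>0$), I would first invoke Theorem~\ref{PP-Th-semi-inf}(3) to produce the bounded semi-infinite-cylinder solution $V^\infty$ on $(0,\infty)\times Q$ with Dirichlet data $\varphi$ on $S_0$, which stabilizes exponentially to $C_\varphi^\infty$. The residual $w^k := v^k - V^\infty$ solves $Lw^k = 0$ in $G_0^k$ with $w^k|_{S_0}=0$ and $w^k|_{S_k} = K - V^\infty(k,\cdot) = (K - C_\varphi^\infty) + O(\|\varphi\|_{L^\infty}e^{-\gamma_0 k})$. The remaining task is then to show that any homogeneous solution with zero data on $S_0$ and bounded data on $S_k$ decays as $e^{-\gamma(k-x_1)}$, with $\gamma>0$ depending only on $\Lambda, d, Q, \bar b_1$. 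For this I would construct a supersolution of the form $\beta(x) = A\,\psi(x)\,e^{-\gamma(k-x_1)}$, where $\psi$ is a small periodic perturbation of $p$ chosen so that $\beta$ satisfies the lateral Neumann condition for $L$. The crucial sign computation is that the leading $O(\gamma)$ part of $L\beta$ integrates over a cross-section to $\gamma\,\bar b_1\,(\int_Q \psi\,dx')\, A e^{-\gamma(k-x_1)}$, strictly positive because $\bar b_1>0$; hence $L\beta\ge 0$ for sufficiently small $\gamma$, and the maximum principle (valid up to the lateral boundary by reflection across the conormal boundary) yields $|w^k|\le \beta$. Combined with the estimate on $V^\infty$, this gives \eqref{estimate_v^k_b1<0}.

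Part~2 ($\bar b_1<0$) follows by the reflection $x_1\mapsto k-x_1$, which swaps the two bases and flips the sign of the effective drift; the Part~1 argument then localizes the boundary layer near $S_0$ and delivers \eqref{estimate_v^k_b1>0}. Part~3 ($\bar b_1=0$) is the degenerate borderline case where no exponential barrier is available from the bulk (the exponent $\gamma$ collapses to zero). Here the conservation law $\mathrm{div}\,\Phi=0$ forces the axial flux of $v^k$ to be of order $(K-C_\varphi^\infty)/k$, which in turn forces $v^k$ to be affine in $x_1$ at leading order. My plan is to compare $v^k$ with the affine interpolation appearing in \eqref{estimate_v^k_b1=0}, treat the boundary layer near $S_0$ (carrying the non-constant part of $\varphi$) by applying Theorem~\ref{PP-Th-semi-inf}(3), and bound the $O(1/k)$ bulk residual by a one-dimensional energy estimate corresponding to the homogenized operator $-\bar a_{11}\partial_1^2$ in the axial variable.

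The principal obstacle will be the quantitative sign analysis of $L\beta$ in Parts~1--2: the candidate barrier $p(x)e^{\gamma x_1}$ does not quite satisfy the lateral Neumann condition for $L$ (only for $L^*$), so one needs an auxiliary small periodic correction $\psi \approx p$; verifying that the corrected $\beta$ still has $L\beta\ge 0$ with $\gamma$ depending only on $\bar b_1,\Lambda,d,Q$ requires careful spectral-perturbation bookkeeping. Part~3 is the most delicate because it sits at the borderline between decay and growth, and the $O(1/k)$ bulk residual must be precisely matched with the exponential boundary layer near $S_0$.
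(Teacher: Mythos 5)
First, a contextual point: this lemma is not proved in the present paper at all. Section~\ref{s_5} is explicitly a summary of results imported from \cite{PaPi}, so there is no in-paper argument to compare yours against; the honest benchmark is the strategy of \cite{PaPi}, which rests on the same ingredients you identify (the conserved $p$-weighted axial flux, factorization by the periodic adjoint eigenfunction, the maximum principle and Harnack inequality up to the lateral boundary). Your overall architecture is therefore in the right spirit rather than a genuinely different route.

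There are, however, two concrete gaps. The first is in your key barrier computation for Part~1. You argue that the $O(\gamma)$ part of $L\beta$ \emph{integrates over a cross-section} to $\gamma\,\bar b_1\,(\int_Q\psi\,dx')\,A e^{-\gamma(k-x_1)}>0$ and conclude ``hence $L\beta\ge 0$.'' That inference is invalid: positivity of a cross-sectional integral says nothing about the pointwise sign of $L\beta$, and the maximum principle needs the pointwise inequality. To make this work you must take $\psi=\theta_\gamma$ to be the exact positive periodic principal eigenfunction of the $\gamma$-twisted operator (with the conormal lateral condition built into the eigenvalue problem), so that $L\beta=\lambda(\gamma)\beta$ holds pointwise, and then use $\lambda(0)=0$, $\theta_0\equiv 1$, and the perturbation formula $\lambda'(0)=-\bar b_1/\int_Y p\,dy$ to fix the sign of $\lambda(\gamma)$ for small $\gamma$. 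This is exactly the ``spectral bookkeeping'' you defer, but as written the step from integrated to pointwise positivity is a non sequitur, and it is the crux of Parts~1 and~2. The second gap is structural: you invoke Theorem~\ref{PP-Th-semi-inf}(3) to produce the semi-infinite solution $V^\infty$, but in \cite{PaPi} that theorem is itself \emph{derived} from the finite-cylinder estimates of the present lemma by letting $k\to\infty$; using it here is circular unless you supply an independent construction of $V^\infty$ (e.g.\ as a limit of the very problems \eqref{prob_fin_cyl_general}, which again requires the estimate you are trying to prove). Finally, in Part~3 the affine profile $\bigl(C_\varphi^\infty(k-x_1)+Kx_1\bigr)/k$ is not a solution of $Au=0$ for variable coefficients, so ``comparing with the affine interpolation'' requires a corrector (or the oscillation-decay/flux argument of \cite{PaPi}); a one-dimensional homogenized energy estimate by itself does not yield the uniform pointwise $O(1/k)$ bound claimed in \eqref{estimate_v^k_b1=0}.
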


\begin{lemma}
\label{PP-Th-v^k-2} \textbf{The case $\varphi=K=0$, $f,g\neq 0$.}

Independently of the sign of $\bar b_1$, there exists a constant $C_\infty$ such that
\begin{align}
\|v^k - C_\infty\|_{L^2(G_N^{N+1})} & \le C (\|(1+x_1^2)f\|_{L^2(G)} + \|(1+x_1^2)g\|_{L^2(\Sigma)}),
\\
\|\nabla v^k\|_{L^2(G)} & \le C (\|(1+x_1^2)f\|_{L^2(G)} + \|(1+x_1^2)g\|_{L^2(\Sigma)}),
\end{align}
where $C$ is independent of $k$ and $N$.

\end{lemma}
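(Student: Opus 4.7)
The plan is to use linearity to reduce to data supported in a single unit strip, apply a weighted energy identity with the positive periodic solution $p$ of the adjoint problem \eqref{PP-problem_for_p} as weight, and then reassemble the local bounds via Cauchy--Schwarz against a summable series; this is the mechanism through which the weight $(1+x_1^2)$ in the statement enters.

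I would first decompose $f = \sum_{n \ge 0} f_n$ with $f_n := f\,\chi_{G_n^{n+1}}$ and $g = \sum_n g_n$ analogously, and write $v^k = \sum_n v_n^k$ where each $v_n^k$ solves \eqref{prob_fin_cyl_general} with data $(f_n, g_n)$ and homogeneous Dirichlet conditions on $S_0\cup S_k$. Testing the equation for $v_n^k$ against $p\,v_n^k$ and integrating by parts over $G_0^k$, the adjoint bulk equation $-\mathrm{div}(a\nabla p)-\mathrm{div}(bp)=0$, the lateral adjoint Neumann condition for $p$, and the vanishing of $(v_n^k)^2$ on the two bases cancel all cross terms and leave the clean identity
\[
\int_{G_0^k} p\,(a\nabla v_n^k,\nabla v_n^k)\,dx = \int_{G_n^{n+1}} f_n\,p\,v_n^k\,dx + \int_{\Sigma_n^{n+1}} g_n\,p\,v_n^k\,d\sigma.
\]
Since $v_n^k(0,\cdot)=0$, the fundamental theorem of calculus yields the Poincar\'e-type bound $\|v_n^k\|_{L^2(G_n^{n+1})}^2 + \|v_n^k\|_{L^2(\Sigma_n^{n+1})}^2 \le C(n+1)\|\nabla v_n^k\|^2_{L^2(G_0^{n+1})}$, which together with the energy identity and the uniform positivity of $p$ produces
\[
\|\nabla v_n^k\|_{L^2(G_0^k)} \le C\sqrt{n+1}\,\bigl(\|f_n\|_{L^2(G_n^{n+1})} + \|g_n\|_{L^2(\Sigma_n^{n+1})}\bigr).
\]
Summing over $n$ and applying Cauchy--Schwarz against $\sum (n+1)^{-3}<\infty$ produces $\|\nabla v^k\|_{L^2(G)} \le CM$; the quadratic weight in the statement is exactly what is needed to make this sum converge.

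For the pointwise bound on $v^k - C_\infty$ I would invoke Lemma~\ref{PP-Th-v^k-1}: on each of the subdomains $G_0^n$ and $G_{n+1}^k$ the function $v_n^k$ solves a homogeneous problem with prescribed boundary data, so it stabilizes to a uniquely determined constant (exponentially when $\bar b_1 \ne 0$ and to a linear profile modulo $O(1/k)$ when $\bar b_1 = 0$). The stabilization constant $c_n^k$ for $v_n^k$ on $G_{n+1}^k$ can be bounded, via local De Giorgi--Nash estimates combined with the gradient bound above, by $C(1+n)(\|f_n\|+\|g_n\|)$, which is again summable against the weight $(1+x_1^2)$. Setting $C_\infty := \sum_n c_n^k$ and splitting $v^k - C_\infty = \sum_n(v_n^k - c_n^k)$, the contributions with $n \le N$ are controlled by the stabilization estimates of Lemma~\ref{PP-Th-v^k-1} while those with $n > N$ are controlled by the gradient--Poincar\'e estimate just obtained; both sum to $CM$ by the same weighted Cauchy--Schwarz. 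The main obstacle I expect is this final assembly, in particular the critical case $\bar b_1 = 0$: the linear rather than exponential stabilization profile forces more delicate bookkeeping, and it is precisely the Dirichlet condition at $S_k$---where the linear profile vanishes---that pins down $c_n^k$ quantitatively and keeps the argument uniform in $k$.
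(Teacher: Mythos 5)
This lemma is imported from \cite{PaPi} and the paper gives no proof of it, so there is no in-paper argument to compare against line by line; however, your proposal is correct and is essentially the same mechanism the paper itself uses for the analogous infinite-cylinder statement, Lemma~\ref{lemma-D-N-nonhomogen}: strip-wise decomposition $f_n=f\chi(G_n^{n+1})$, the energy identity obtained by testing with the positive adjoint ground state times the solution, the Poincar\'e/Friedrichs bound with the $\sqrt{1+n}$ factor, the homogeneous-problem lemma to propagate the trace data away from the source strip, and Cauchy--Schwarz against a summable series to convert the $(1+|x_1|)$-type factors into the $(1+x_1^2)$ weight. The only points worth tightening are cosmetic: your $C_\infty=\sum_n c_n^k$ is $k$-dependent as written (harmless, since it is bounded by $CM$ uniformly in $k$), and passing from the $L^2$ strip bounds to the $L^\infty$ boundary data needed in Lemma~\ref{PP-Th-v^k-1} should be done via De Giorgi--Nash on a strip adjacent to, not containing, the support of $f_n$, where the equation is homogeneous.
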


%==========================================================================================================================

\end{document}